\documentclass[reqno,11pt]{amsart}        
\usepackage{amsfonts,amsmath,amssymb,amsthm,colordvi,mathrsfs,graphicx}
\usepackage{caption,subcaption,float}
\usepackage[utf8]{inputenc}
\usepackage{mathrsfs}
\usepackage{geometry}
\usepackage{enumerate}

\usepackage{amsmath, amssymb, amsthm, geometry, hyperref}

\usepackage{tabularx}
\usepackage{tabulary}
 
\hypersetup{
  colorlinks=true,
  linkcolor=blue,
  citecolor=blue,
  urlcolor=blue
}
\usepackage[all,knot,poly]{xy}
\usepackage{tikz-cd}
\usepackage{tikz}
\usepackage{verbatim}
\usetikzlibrary{arrows}
\usetikzlibrary{knots}
\tikzset{every path/.style={line width=0.4pt},every node/.style={transform shape,knot crossing,inner sep=1.5pt},>=triangle 60,text node/.style={rectangle,transform shape=false,black}}
 \usetikzlibrary{decorations.markings, arrows.meta}

\theoremstyle{plain}      
\newtheorem{thm}{Theorem}[section]     
\newtheorem{theorem}[thm]{\bf Theorem}     
     
\newtheorem{lemma}[thm]{\bf Lemma}     
\newtheorem{proposition}[thm]{\bf Proposition}     

\theoremstyle{remark}

\theoremstyle{definition}      
\newtheorem{definition}[thm]{Definition}     
\newcommand{\di}{\displaystyle}

\subjclass[2020]{14T20, 52B20,  14M25,14N20, 52C07}
\keywords{Tropical geometry, tropical hypersurfaces, tropical degree, stable intersections, mixed volume, Newton polytope,  lattice index,
tropical Bernstein theorem,
tropical B\'ezout theorem
}

\begin{document}

\author{Mounir Nisse}

\address{Mounir Nisse\\
Department of Mathematics, Xiamen University Malaysia, Jalan Sunsuria, Bandar Sunsuria, 43900, Sepang, Selangor, Malaysia.
}
\email{mounir.nisse@gmail.com, mounir.nisse@xmu.edu.my}
\thanks{}
\thanks{This research is supported in part by Xiamen University Malaysia Research Fund (Grant no. XMUMRF/ 2020-C5/IMAT/0013).}

%\maketitle

 %%%%%%%%%%%%%%%%%%%%%%%%%%%%%%%%%%%%%%%%%%%%%%%%%%%%%%%%%%%%%%%%%%%%%%%% 
 
\title{Tropical Degrees and Stable Intersections}

\maketitle

\begin{abstract}
We study tropical degree bounds, stable tropical intersections, and tropical B\'ezout-type estimates through the geometry of Newton polytopes, mixed subdivisions, and lattice indices. We establish an upper bound for the tropical degree of a tropical hypersurface in terms of the $\ell^1$-diameter of the support of its defining tropical polynomial. We then investigate stable tropical intersections using the intrinsic framework of Jensen and Yu and show that local stable intersection multiplicities admit explicit determinant and lattice-volume interpretations. For transverse tropical complete intersections, we recover tropical Bernstein-type formulas through fully mixed cells in mixed subdivisions of Newton polytopes. We further analyze the non-complete-intersection setting and prove that mixed volumes still provide natural local upper bounds after transverse local reductions. The results reveal a direct geometric relationship between tropical degrees, Newton polytopes, lattice covolumes, and stable tropical intersection theory.
\end{abstract}
 
%%%%%%%%%%%%%%%%%%%%%%%%%%%%%%%%%%%%%%%%%%%%%%%%%%%%%%%%%%%%%%%%%%%%%%%%%%%
 \section*{Introduction}

Tropical geometry has emerged over the last two decades as a powerful interface between algebraic geometry, combinatorics, convex geometry, and polyhedral geometry. By replacing the classical algebraic operations with the tropical operations of maximum and addition, one obtains piecewise-linear geometric objects that nevertheless retain deep information about classical algebraic varieties. The subject has developed rapidly through foundational works of Mikhalkin, Speyer, Sturmfels, Maclagan, and many others, and now plays a central role in enumerative geometry, mirror symmetry, non-Archimedean geometry, optimization, and combinatorial algebraic geometry. Tropical hypersurfaces and tropical varieties encode subtle algebraic phenomena through polyhedral complexes and Newton polytopes, while tropical intersection theory provides a combinatorial framework for understanding multiplicities, transversality, and intersection numbers.

One of the fundamental principles in tropical geometry is the duality between tropical varieties and regular subdivisions of Newton polytopes. Through this correspondence, geometric properties of tropical intersections become closely related to mixed subdivisions, lattice volumes, and mixed volumes of convex polytopes. This connection forms the tropical analogue of the classical Bernstein--Kushnirenko theory \cite{Bernstein1975,Kushnirenko1976,GKZ1994}. In particular, tropical stable intersection multiplicities can often be interpreted as lattice indices or normalized lattice volumes of mixed cells arising in Newton subdivisions. The resulting interplay between tropical geometry and convex geometry has become one of the most important structural features of the subject.

A central difficulty in tropical intersection theory is that ordinary set-theoretic intersections do not behave correctly in the tropical setting. Tropical varieties may intersect with excess dimension, and naive intersections fail to capture the correct intersection-theoretic information. This issue motivated the introduction of stable intersections, which provide the tropical analogue of transverse intersections in classical geometry. The intrinsic theory of stable intersections developed by A.~N.~Jensen and J.~Yu in their beautiful and influential paper \cite{JensenYu2017} gave a conceptual and purely tropical definition of stable intersection products using local links and Minkowski differences of tropical cycles. Their approach clarified the geometric meaning of tropical intersections and established a robust framework for tropical intersection multiplicities independent of perturbation choices. The Jensen--Yu work in stable intersections now forms one of the foundational pillars of modern tropical intersection theory.

The purpose of the present paper is to study tropical degrees, stable tropical intersections, and tropical B\'ezout-type estimates through the geometry of Newton polytopes, lattice indices, and mixed subdivisions. The main theme of this work is that tropical intersection multiplicities admit a remarkably transparent lattice-theoretic and polyhedral interpretation. We establish explicit upper bounds for tropical degrees in terms of combinatorial invariants of Newton polytopes and show that local stable intersection multiplicities can be computed through determinant formulas associated with primitive normal vectors of tropical facets. These determinant formulas naturally coincide with normalized lattice volumes of fully mixed cells in the corresponding mixed subdivisions.

The first part of the paper establishes a new upper bound for the tropical degree of a tropical hypersurface in terms of the $\ell^1$-diameter of the support of the defining tropical polynomial. More precisely, if $M \subset \mathbb{Z}^n$ is the support of the tropical polynomial, we prove that the tropical degree is bounded above by the combinatorial diameter of $M$. The proof combines the metric tree structure of tropical lines, monotonicity properties of geodesic paths, and the behavior of dominant monomials along transversal tropical intersections. This provides a direct and explicit connection between the combinatorics of Newton polytopes and the global geometry of tropical hypersurfaces.

We then investigate stable intersections of tropical varieties from both the local and global perspectives. Using the intrinsic framework of Jensen and Yu \cite{JensenYu2017}, we describe stable intersection multiplicities through tangent lattices and lattice indices associated with tropical facets. We show that, in the transverse case, the local multiplicity is precisely the absolute value of a determinant formed by primitive normal vectors. This yields a direct lattice-theoretic interpretation of tropical multiplicities and explains naturally why tropical multiplicities coincide with normalized lattice volumes of dual mixed cells.

A major aspect of this work concerns the relationship between tropical complete intersections and mixed volumes. For transverse tropical complete intersections, we recover the tropical Bernstein theorem by identifying stable intersection multiplicities with lattice volumes of fully mixed cells in mixed subdivisions of Minkowski sums of Newton polytopes. This establishes a transparent geometric bridge between tropical intersection theory and convex geometry. In addition, we study the substantially more subtle non-complete-intersection situation, where the number of defining tropical equations exceeds the codimension of the tropical variety. In this setting, there is generally no canonical globally independent subsystem of equations, and therefore no intrinsic global mixed-volume formula. Nevertheless, we show that mixed volumes still provide natural local upper bounds for stable intersection multiplicities through transverse local reductions.

The paper also develops a systematic lattice-theoretic framework for tropical intersection multiplicities. We prove determinant-index formulas for sums of tangent lattices associated with rational polyhedral cells and establish precise correspondences between stable tropical multiplicities, covolumes of lattice subgroups, and normalized mixed-cell volumes. These results clarify the algebraic and geometric meaning of tropical multiplicities and provide a unified interpretation of several constructions appearing throughout tropical intersection theory.

Our results fit naturally into the broader development of tropical geometry initiated by Mikhalkin \cite{Mikhalkin2004,Mikhalkin2006}, Speyer and Sturmfels \cite{SpeyerSturmfels2004}, and systematically developed in the monograph of Maclagan and Sturmfels \cite{MaclaganSturmfels2015}. They are also closely related to tropical intersection theory as developed by Allermann and Rau \cite{AllermannRau2010} and to the polyhedral and mixed-volume methods originating from the classical works of Bernstein and Kushnirenko \cite{Bernstein1975,Kushnirenko1976}. The geometric philosophy underlying this paper is that tropical intersection theory is governed simultaneously by combinatorial balancing conditions, lattice geometry, and convex-geometric structures encoded by Newton polytopes and mixed subdivisions.

The methods developed here suggest several further directions. One may study sharper tropical degree estimates for higher-codimensional tropical varieties, investigate effective bounds in terms of finer Newton polytope invariants, and analyze the interaction between stable tropical intersections and amoeba-theoretic phenomena. Another natural direction is the study of tropical contours and real tropical geometry, in connection with the work of Lang, Shapiro, and Shustin \cite{LSS}. It would also be interesting to investigate extensions of the present approach to tropical cycles with singularities, non-transverse stable intersections, and tropical intersection theories in non-smooth settings.

%%%%%%%%%%%%%%%%%%%%%%%%%%%%%%%%%%%%%%%%%%%%%%%%%%%%%%%%%%%%%%%%%%%%%%%%%%

\subsection*{Strengthened Novelty Claims.} 

The main novelty of the present paper lies in the development of a unified geometric framework connecting tropical degree theory, stable tropical intersections, mixed subdivisions, and lattice-theoretic multiplicities. While tropical intersection multiplicities and mixed-volume formulas are classical in tropical geometry, the present work introduces several new structural perspectives and geometric interpretations.

The first principal novelty is the establishment of an explicit upper bound for tropical degrees in terms of the $\ell^1$-diameter of the support of the defining tropical polynomial. This provides a direct quantitative relation between the combinatorial geometry of the support set and the global intersection complexity of tropical hypersurfaces. The argument combines metric-tree properties of tropical lines, monotonicity phenomena along tropical geodesics, and dominant monomial transitions in a way that does not appear in existing tropical degree estimates.

A second novelty is the systematic interpretation of stable tropical intersection multiplicities through determinant-index formulas associated with tangent lattices. Although determinant formulas are implicitly present in tropical intersection theory, the paper develops a unified lattice-theoretic treatment that directly connects primitive normal vectors, covolumes of tangent lattices, normalized mixed-cell volumes, and stable tropical multiplicities.

Another important contribution is the treatment of the non-complete-intersection setting. Most tropical Bernstein-type results concern complete intersections where the number of equations matches the codimension. In contrast, the present paper studies tropical varieties defined by redundant systems of tropical equations and proves that local transverse reductions lead naturally to essential mixed-volume bounds. This clarifies the geometric meaning of local independence among tropical hypersurfaces and explains how mixed volumes continue to govern multiplicities even when no canonical global subsystem exists.

The paper also provides a geometric synthesis between stable tropical intersections and mixed subdivisions. In particular, the correspondence between local stable multiplicities and normalized lattice volumes of fully mixed cells is developed systematically through tangent lattices and dual polyhedral geometry. This gives a transparent geometric explanation for the appearance of mixed volumes in tropical Bernstein theory.

From a broader perspective, the paper emphasizes a geometric philosophy in which tropical degree theory, tropical multiplicities, and stable intersections are governed simultaneously by convex geometry, balanced polyhedral structures, and lattice covolumes. This unified viewpoint may provide a useful framework for further developments involving tropical duality, tropical discriminants, amoeba contours, and real tropical geometry. 

%%%%%%%%%%%%%%%%%%%%%%%%%%%%%%%%%%%%%%%%%%%%%%%%%%%%%%%%%%%%%%%%%%%%%%%%%%

\subsection*{Organization of the paper.}

Section 1 establishes an upper bound for the tropical degree of a tropical hypersurface in terms of the $\ell^1$-diameter of the support of the defining tropical polynomial. The proof uses the geometry of tropical lines and monotonicity properties of tropical geodesics.
Section 2 develops the stable intersection framework and reviews the intrinsic theory of Jensen and Yu. We interpret stable intersection multiplicities through tangent lattices, primitive normal vectors, and lattice indices.
Section 3 studies mixed subdivisions and their relation to tropical Bernstein theory. We show that local tropical multiplicities coincide with normalized lattice volumes of fully mixed cells.
Section 4 establishes a tropical Bernstein-type theorem for transverse tropical complete intersections and derives determinant formulas for local tropical multiplicities.
Section 5 develops the lattice-theoretic interpretation of tropical intersection multiplicities and proves determinant-index formulas for tangent lattices associated with tropical cells.
Section 6 proves a tropical B\'ezout-type theorem in arbitrary codimension using local transverse reductions and essential Newton polytopes.
Section 7 discusses  the connections of this work with amoeba contours and real tropical geometry.
The appendices contain the lattice-theoretic foundations concerning rational polyhedra, tangent lattices, and lattice-index computations used throughout the paper.

%%%%%%%%%%%%%%%%%%%%%%%%%%%%%%%%%%%%%%%%%%%%%%%%%%%%%%%%%%%%%%%%%%%%%%%%%%%
 
\section{Degree Bounds for Tropical Hypersurfaces}

In this section we establish an upper bound for the tropical degree of a
tropical hypersurface in terms of the combinatorial geometry of its
Newton polytope.
More precisely, we prove that the number of transverse intersections
with tropical lines is controlled by the $\ell^1$-diameter of the
support of the defining tropical polynomial.
The proof combines the metric tree structure of tropical lines,
monotonicity properties of geodesic paths, and the behavior of dominant
monomials along tropical intersections.
This provides a direct connection between the combinatorics of the
support set and the global geometry of the tropical hypersurface.

\begin{definition}
Let
\(\di
p(x)
=
\max_{\alpha\in\mathcal M}
(\langle x,\alpha\rangle+c_\alpha)
\)
be a tropical polynomial with finite support
\(
\mathcal M\subset\mathbb Z^n.
\)
The associated tropical hypersurface is
\[
H=
\left\{
x\in\mathbb R^n:
\text{the maximum in }p(x)\text{ is attained at least twice}
\right\}.
\]
Its Newton polytope is
\(
\Delta=\operatorname{conv}(\mathcal M).
\)
\end{definition}

\begin{definition}
The tropical degree of a tropical hypersurface
\(
H\subset\mathbb T^n
\)
is
\[
\mathbb T\operatorname{deg}(H)
=
\sup_L |L\cap H|,
\]
where the supremum is taken over all tropical lines
\(
L\subset\mathbb T^n
\)
intersecting
\(
H
\)
transversally, and intersections are counted without multiplicity.
\end{definition}

Let $\mathcal{M}\subset \mathbb{Z}^n$ be a finite set and let
\(\di
p(x)=\max_{\alpha\in\mathcal{M}}
\big(\langle x,\alpha\rangle+c_\alpha\big)
\)
be a tropical polynomial supported on $\mathcal{M}$, where $c_\alpha\in\mathbb{R}$. Let $H\subset\mathbb{R}^n$ be the associated tropical hypersurface.

\medskip

\begin{theorem}\label{main:thm1}
Let $\mathcal{M}\subset \mathbb{Z}^n$ be a finite set and let
\(
\Delta=\operatorname{conv}(\mathcal{M})\subset\mathbb{R}^n
\)
be the Newton polytope of a tropical polynomial
\(\di
p(x)=\max_{\alpha\in\mathcal{M}}
\big(\langle x,\alpha\rangle+c_\alpha\big).
\)
Let $H\subset\mathbb{R}^n$ be the associated tropical hypersurface. Then
\[
\mathbb{T}\operatorname{deg}(H)\leq \operatorname{diam}(\mathcal{M}),
\]
where
\(\di
\operatorname{diam}(\mathcal{M})
=
\max_{\alpha,\beta\in\mathcal{M}}
\|\alpha-\beta\|_1.
\)
In particular,
\[
\mathbb{T}\operatorname{deg}(H)\leq |\mathcal{M}|-1.
\]
\end{theorem}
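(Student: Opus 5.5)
The plan is to reduce the count $|L\cap H|$ to a telescoping sum of slope increments of $p$ restricted to the tree $L$, and then to bound that sum by differences $\langle \alpha-\beta,u\rangle$ that are controlled by $\|\alpha-\beta\|_1$.

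First, fix a tropical line $L\subset\mathbb{R}^n$ meeting $H$ transversally. I will use the standard description of $L$: a finite metric tree all of whose edges and rays have primitive directions of the form $\pm e_I=\pm\sum_{k\in I}e_k$ for subsets $I\subset\{1,\dots,n\}$ (after fixing the ray directions $-e_1,\dots,-e_n,\ e_1+\dots+e_n$ in the max convention). The tree is balanced at each vertex. Transversality means three things: every point of $L\cap H$ lies in the relative interior of an edge or ray of $L$; it lies in the relative interior of a facet of $H$; and the maximum of $p$ there is attained by exactly two monomials $\alpha,\beta$.

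Second, I parametrize each edge $\sigma$ of $L$ as $x(t)=x_0+tu_\sigma$. The restriction $t\mapsto p(x(t))$ is a convex piecewise-linear function whose slopes are the integers $\langle\alpha,u_\sigma\rangle$ for the dominant monomial $\alpha$. At a transverse crossing the dominant monomial changes from $\alpha$ to $\beta$, and the slope strictly increases, by the integer $\langle\beta-\alpha,u_\sigma\rangle\ge 1$. Hence the number of crossings on $\sigma$ is at most the total slope increase along $\sigma$:
\[
\#(\sigma\cap H)\le\langle\alpha_{\mathrm{end}}(\sigma)-\alpha_{\mathrm{start}}(\sigma),u_\sigma\rangle .
\]

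Third, I sum over all edges and rays, orienting each edge away from a chosen root vertex. At each vertex of $L$ the outgoing slopes of $p$ satisfy the balancing relation $\sum u_\sigma=0$, applied to the common dominant monomial at that vertex. With this, the internal contributions telescope. What survives is a sum over the $n+1$ unbounded ends, of the form $\sum_{\text{ends}}\langle\alpha_{\text{end}},u_{\text{end}}\rangle$, minus the analogous expression at the root. This is the standard fact that the stable intersection number of $L$ and $H$ is a mixed volume with the standard simplex. Since every coordinate of $u$ lies in $\{0,\pm1\}$, each pairing $\langle\alpha-\beta,u\rangle$ is bounded by $\|\alpha-\beta\|_1$. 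The goal is to pair the ends so that the total is dominated by a single difference $\langle\alpha-\beta,e_I\rangle$ with $\alpha,\beta\in\mathcal{M}$, giving $|L\cap H|\le\operatorname{diam}(\mathcal{M})$. Taking the supremum over $L$ then proves the first inequality.

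I expect this telescoping-and-pairing step to be the main obstacle. A crude count gives about $n$ times a diameter rather than one diameter, because the $n+1$ ends each contribute a term. I would first try to decompose $L$ into a geodesic between two ends plus branches, and use monotonicity of the dominant monomial along each geodesic path. If the branches cannot be absorbed this way, the bound must come from the more refined quantity $\max_{\alpha}\sum_i\alpha_i-\min_\alpha\sum_i\alpha_i$ together with the coordinate-wise ranges, and this would need to be compared carefully with the $\ell^1$-diameter.

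Finally, the ``in particular'' statement does not follow formally from the first inequality: for $\mathcal{M}=\{0,5\}\subset\mathbb{Z}$ one has $\operatorname{diam}(\mathcal{M})=5>|\mathcal{M}|-1$. It therefore needs a separate argument. I would try to show that along any monotone path in $L$ the dominant monomials visited are pairwise distinct, because the slopes strictly increase. The difficulty is that different branches of the tree may revisit the same monomials, so this step also needs care, and it is possible that the bound $|\mathcal{M}|-1$ holds only along single geodesics.
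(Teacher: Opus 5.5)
\textbf{There is a genuine gap, and it cannot be closed.} The step you flagged as the main obstacle is collapsing the $n+1$ end contributions into a single difference $\langle\alpha-\beta,e_I\rangle$. This step cannot be carried out, because the first inequality is false.

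Your edgewise convexity and telescoping steps are sound. Balancing at every vertex kills all interior terms, and they give $|L\cap H|\le \sum_{\text{ends}}\max_{\alpha\in\mathcal M}\langle\alpha,u_{\text{end}}\rangle=\max_{\alpha}\sum_k\alpha_k-\sum_k\min_{\alpha}\alpha_k$. This is the stable intersection number $n!\,\operatorname{MV}(\Delta,\Sigma,\dots,\Sigma)$, with $\Sigma$ repeated $n-1$ times. However, the coordinatewise minima are in general attained at different points of $\mathcal M$, so no single $\ell^1$-difference dominates the sum.

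Concretely, take $n=3$, $\mathcal M=\{(1,1,1),(0,1,1),(1,0,1),(1,1,0)\}$ and all $c_\alpha=0$. Then $p(x)=x_1+x_2+x_3-\min(0,x_1,x_2,x_3)$ and $\operatorname{diam}(\mathcal M)=2$.
\begin{itemize}
\item Let $L$ be the tropical line with vertices $(1,1,1)$, carrying the rays $-e_1,-e_2$, and $(2,2,1)$, carrying the rays $-e_3$ and $e_1+e_2+e_3$, joined by an edge of direction $e_1+e_2$.
\item Then $L\cap H=\{(0,1,1),(1,0,1),(2,2,0)\}$.
\item Each point lies in the interior of a ray of $L$, exactly two monomials tie there, and the ray direction pairs nontrivially with the facet normal. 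So the intersection is transverse and $\mathbb{T}\operatorname{deg}(H)\ge 3>2$.
\end{itemize}
The same construction with $\mathcal M=\{\mathbf{1}\}\cup\{\mathbf{1}-e_k\}$ in $\mathbb R^n$, using a caterpillar line, gives $n$ transverse points against diameter $2$. So your worry about getting ``$n$ times a diameter'' is the truth.

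The paper's own proof hides exactly this obstacle. It picks a geodesic of $L$ passing through all intersection points, and no such geodesic exists in the example above: the three points lie on three different ends.

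\textbf{The ``in particular'' clause.} You were right that it does not follow from the first inequality. It is in fact false too, even along a single geodesic, so your ``pairwise distinct dominant monomials'' idea cannot work either. Take $n=2$ and $p(x,y)=\max(2x+2y+4,\;y+3,\;x-1)$, with support $A=(2,2)$, $B=(0,1)$, $C=(1,0)$, and let $L=V(\max(0,x,y))$.
\begin{itemize}
\item Then $L\cap H=\{(-\tfrac12,0),(0,-1),(0,-4)\}$, and every point is transverse.
\item The multiplicities are $2,1,1$, totalling $4=\max_\alpha\sum_k\alpha_k-\sum_k\min_\alpha\alpha_k$.
\item Hence $\mathbb{T}\operatorname{deg}(H)\ge 3>2=|\mathcal M|-1$.
\end{itemize}
All three points lie on the geodesic from the $-e_1$ end to the $-e_2$ end, and the dominant monomials along it are $B,A,B,C$. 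So the pair $\{A,B\}$ exchanges dominance twice. This also refutes the paper's claim that the jumps $v_i=\alpha_i-\alpha_{i-1}$ along a geodesic are coordinatewise sign-coherent. Here the jumps are $(2,1),(-2,-1),(1,-1)$, and $r=3>\|\alpha_3-\alpha_0\|_1=\|C-B\|_1=2$.

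\textbf{What your argument actually proves.} It proves the mixed-volume bound above. Neither a diameter bound nor an $|\mathcal M|-1$ bound can be extracted from it.
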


%%%%%%%%%%%%%%%%%%%%%%%%%%%%%%%%%%%%%%%%%%%%%%%%%%%%%%%%%%%%%%%%%%%%%%%%%%%
  
 %%%%%%%%%%%%%%%%%%%%%%%%%%%%%%%%%%%%%%%%%%%%%%%%%%%%%%%%%%%%%%%%%%%%%%%%%%%

% 

\begin{lemma}\label{lemma:1}
Let
\(\Gamma\subset\mathbb R^n\)
be a geodesic path in a tropical line.
Let
\(\gamma:[0,\ell]\to\Gamma\)
be the arc-length parametrization.
Then for every coordinate
\(k\in\{1,\dots,n\}\),
the function
\(
t\mapsto \gamma_k(t)
\)
is monotone.
\end{lemma}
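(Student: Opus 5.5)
We prove Lemma~\ref{lemma:1} using the explicit structure of a tropical line in $\mathbb R^n$. Such a line $L$ is a balanced one-dimensional polyhedral complex whose underlying space is a metric tree. It has finitely many vertices and bounded edges, together with exactly $n+1$ unbounded rays, in the directions $-e_1,\dots,-e_n$ and $e_1+\cdots+e_n$ for the max convention; the argument is insensitive to the sign convention. The basic structural fact we use is that every edge of $L$, bounded or not, has direction $\pm e_I:=\pm\sum_{i\in I}e_i$ for some nonempty proper subset $I\subsetneq\{0,\dots,n\}$. Here $e_0:=-(e_1+\cdots+e_n)$, so that $e_0+e_1+\cdots+e_n=0$. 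Removing an edge disconnects the tree into two components, and balancing forces the edge direction to be the sum of the ray directions on one side. This gives the form $e_I$ with $I$ the set of ends on that side.

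First we reduce to a combinatorial statement. Since $L$ is a tree, a geodesic path $\Gamma$ is the unique embedded path between its endpoints, which may be ends at infinity. Hence $\Gamma$ is a concatenation of edge segments $\sigma_1,\dots,\sigma_m$, and $\gamma$ is affine on each segment. So $\gamma_k$ is piecewise linear, and monotonicity is equivalent to the following claim: the $k$-th coordinates of the direction vectors $v_1,\dots,v_m$ (oriented along $\gamma$) never take strictly opposite signs. Note that the $k$-th coordinate of $e_I$ is $\mathbf 1_{k\in I}-\mathbf 1_{0\in I}$, which always lies in $\{-1,0,1\}$.

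Next we establish the sign claim by a cut argument. Orient $\Gamma$ from one end to the other. Traversing $\sigma_j$ in the direction of $\gamma$ cuts $L$ into a ``behind'' component and an ``ahead'' component. By balancing, $v_j=e_{A_j}$, where $A_j\subseteq\{0,\dots,n\}$ is the set of unbounded ends lying in the ahead component. Here we use the convention that the velocity points toward the ends ahead. This convention makes sense because $\sum_{i\in A}e_i=-\sum_{i\notin A}e_i$, so the choice of side is consistent with the orientation. As $j$ increases, the ahead components are nested decreasingly, because $\Gamma$ is an embedded path in a tree. Hence $A_1\supseteq A_2\supseteq\cdots\supseteq A_m$. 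Consequently, the function $j\mapsto \mathbf 1_{k\in A_j}-\mathbf 1_{0\in A_j}$ is a difference of two nonincreasing $\{0,1\}$-valued sequences. We then check that such a difference cannot take both values $+1$ and $-1$. A value $+1$ at $j$ means $k\in A_j$ and $0\notin A_j$. A value $-1$ at $j'$ means $0\in A_{j'}$ and $k\notin A_{j'}$. By nestedness, the first forces $0\notin A_{j'}$ whenever $j'\ge j$, and the second forces $k\notin A_j$ whenever $j\ge j'$. Either order of $j$ and $j'$ therefore gives a contradiction. So each $\gamma_k$ has slopes of a single sign, and it is monotone.

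The main obstacle is the justification of $v_j=e_{A_j}$, that is, the claim that each edge direction equals the sum of the primitive ray directions on one side. This is where balancing at every vertex is used: summing the balancing conditions over all vertices of the ahead subtree cancels the internal edges and leaves the cut edge direction equal to the sum of the ahead ray directions. One also needs the edge weights of a tropical line to be $1$, which is part of the standard definition or can be deduced as in Mikhalkin's description. If $\gamma$ starts or ends in the interior of an edge, the same argument applies to the partial segments without change.
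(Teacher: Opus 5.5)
Your proof is correct, but it takes a different route from the paper's. The paper argues locally and by contradiction. If $\gamma_k$ is not monotone, it picks two consecutive edges $E_i,E_{i+1}$ of $\Gamma$ whose $k$-th direction coordinates have opposite signs. It then writes the balancing condition $-u_i+u_{i+1}+w_i=0$ at their common vertex, with $w_i$ the third edge, and obtains $(w_i)_k=\pm2$, contradicting the fact that all edge directions of a tropical line have coordinates in $\{-1,0,1\}$. You argue globally instead. Cutting $L$ along each edge of $\Gamma$ and summing the balancing conditions over the ahead subtree identifies each oriented edge direction as $e_{A_j}$ with $A_1\supseteq A_2\supseteq\cdots\supseteq A_m$. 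Monotonicity then reduces to the observation that $\mathbf 1_{k\in A_j}-\mathbf 1_{0\in A_j}$ cannot take both values $\pm1$. The paper's version is shorter, but it tacitly assumes that a sign change in the slope of $\gamma_k$ occurs between two \emph{adjacent} edges at a \emph{trivalent} vertex. It therefore does not exclude a slope pattern $+1,0,-1$, where a zero-slope edge separates the sign change; there, balancing at each of the two vertices only forces the third edge to have $k$-th coordinate $1$, which is no contradiction. Nor does it exclude a vertex of valence at least four, where the discrepancy $\pm2$ can be shared by two other edges. Your nestedness argument rules out both situations uniformly and also gives the explicit slopes along the path. The one point to phrase more carefully is the sign convention. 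With rays $-e_0,\dots,-e_n$ (max convention) one gets $v_j=-e_{A_j}=e_{A_j^c}$, so the relevant end-sets are the ones behind, which increase along $\Gamma$; the same two-line argument then applies with nondecreasing sequences.
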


\begin{proof}
The path
\(\Gamma\)
is a concatenation of finitely many edges
\(
E_1,\dots,E_m.
\)
Every edge of a tropical line has primitive direction vector of the form
\(\di
u_I=\sum_{i\in I} e_i
\)
up to sign, where
\(I\subset\{1,\dots,n\}\)
is nonempty and proper.
Hence every coordinate of every primitive edge direction belongs to
\(\{-1,0,1\}\).
Fix a coordinate
\(k\).
Suppose that
\(
t\mapsto \gamma_k(t)
\)
is not monotone.
Then there exist three points
\[
0<t_1<t_2<t_3<\ell
\]
such that either
\[
\gamma_k(t_1)<\gamma_k(t_2)
\quad\text{and}\quad
\gamma_k(t_3)<\gamma_k(t_2),
\]
or
\[
\gamma_k(t_1)>\gamma_k(t_2)
\quad\text{and}\quad
\gamma_k(t_3)>\gamma_k(t_2).
\]
Thus the derivative of
\(
\gamma_k
\)
must change sign at least once along the path.
Since the derivative is piecewise constant with values in
\(\{-1,0,1\}\),
there exist two consecutive edges
\(
E_i
\, \text{and}\, 
E_{i+1}
\)
such that the
\(k\)-th coordinate of their primitive direction vectors has opposite signs.
Let
\(u_i\)
and
\(u_{i+1}\)
be the oriented primitive direction vectors of these edges.
Let
\(w_i\)
be the primitive outgoing direction vector of the third edge adjacent to the common vertex.
The balancing condition gives
\[
-u_i+u_{i+1}+w_i=0.
\]
Taking the
\(k\)-th coordinate,
\(
-(u_i)_k+(u_{i+1})_k+(w_i)_k=0.
\)
If
\((u_i)_k\)
and
\((u_{i+1})_k\)
have opposite signs, then
\(
|(u_i)_k-(u_{i+1})_k|=2.
\)
Hence
\(
(w_i)_k=\pm2.
\)
But every coordinate of every primitive tropical edge direction belongs to
\(\{-1,0,1\}\).
This is impossible.
Therefore the sign of
\(
\gamma_k'(t)
\)
cannot change along the path.
Hence
\(
t\mapsto\gamma_k(t)
\)
is monotone.
\end{proof}

%%%%%%%%%%%%%%%%%%%%%%%%%%%%%%%%%%%%%%%

%\medskip
%%%%%%%%%%%%%%%%%%%%%%%%%%%%%%%%%%%%%%%

\noindent {\it Proof of Theorem \ref{main:thm1}.}
%%%%%%%%%%%%%%%%%%%%%%%%%%%%%%%%%%
%%%%%%%%%%%%%%%%%%%%%%%%%%%%%%%%%%
 
Fix a tropical line
\(
L\subset\mathbb R^n
\)
intersecting
\(
H
\)
transversally.
We shall prove that
\(
|L\cap H|
\le
\operatorname{diam}(\mathcal M).
\)
Since the intersection is transversal,
\(
L\cap H
\)
is finite.
Write
\(
L\cap H=\{x_1,\dots,x_r\},\)
 where
\(r=|L\cap H|.
\)
The tropical line
\(
L
\)
is a finite metric tree.
Removing the points
\(
x_1,\dots,x_r
\)
disconnects the tree into exactly
\(
r+1
\)
connected components.
Denote them by
\(
C_0,\dots,C_r.
\)
%
%%%%%%%%%%%%%%%%%%%%%%%%%%%%%%%%%%%%%%%%%%%%%%%
 Let
$
L
$
be a tropical line. Since a tropical line is a metric tree, between any two points of
$
L
$
there exists a unique embedded path.
Now choose one unbounded ray of
$
L
$
as a root, and denote it by
$
R_0.
$
We think of this ray as the ``root direction'' of the tree.
Since
$
L
$
is a tree, between the chosen root point
$
p_0
$
and every point of
$
L
$
there exists a unique path.
Let
$
e
$
be an edge of
$
L.
$
Removing the interior of
$
e
$
disconnects the tree into two connected components. Exactly one component contains the root point
$
p_0.
$

If
$
v
$
is the endpoint of $e$ lying in the component containing $p_0,$ and $w$
is the other endpoint, then we orient$e$ from $v$ toward $w.$ Equivalently, every edge is oriented in the direction in which the distance from the root increases.
Thus the choice of a root induces a coherent orientation on all edges of the tropical line. In other words, this induces an orientation on every edge of the tree away from the root.
%
%%%%%%%%%%%%%%%%%%%%%%%%%%%%%%%%%%%%%%%%%%%%%%
The connected components
\(
C_0,\dots,C_r
\)
inherit a partial order from this orientation.
Choose a maximal oriented geodesic path
\(
\Gamma\subset L
\)
which passes successively through all intersection points
\(
x_1,\dots,x_r.
\)
Let
\(
\gamma:[0,\ell]\to\Gamma
\)
be the arc-length parametrization consistent with the orientation.
Since every connected component
\(
C_i
\)
is disjoint from
\(
H,
\)
there exists a unique exponent
\(
\alpha_i\in\mathcal M
\)
such that
\(
p(x)
=
\langle x,\alpha_i\rangle+c_{\alpha_i}
\)
for all
\(
x\in C_i.
\)
Since the intersection with
\(
H
\)
is transversal, adjacent connected components correspond to distinct dominant monomials.
Hence
\(
\alpha_i\ne\alpha_{i-1}
\)
for every
\(
i=1,\dots,r.
\)

Now define
\(
v_i=\alpha_i-\alpha_{i-1}.
\)
We first prove that
\(
\|v_i\|_1\ge1
\)
for every
\(
i.
\)
Since
\(
\alpha_i\ne\alpha_{i-1},
\)
the vector
\(
v_i
\)
is a nonzero integral vector.
Therefore
\[
\|v_i\|_1
=
\sum_{k=1}^n |(v_i)_k|
\]
is a positive integer.
Hence
\(
\|v_i\|_1\ge1.
\)
Consequently,
\[
r
\le
\sum_{i=1}^r \|v_i\|_1.
\]

Now observe that
\[
\sum_{i=1}^r v_i
=
\sum_{i=1}^r (\alpha_i-\alpha_{i-1})
=
\alpha_r-\alpha_0.
\]

We shall prove that the vectors
\(
v_1,\dots,v_r
\)
have a common sign pattern coordinatewise along the oriented geodesic path.
More precisely, for every coordinate
\(
k\in\{1,\dots,n\},
\)
all nonzero numbers
\(
(v_i)_k
\)
have the same sign.
Fix a coordinate
\(
k.
\)
Consider the affine-linear function
\(
\phi_k(x)=x_k.
\)
Restrict this function to the geodesic path
\(
\Gamma.
\)
Every edge of a tropical line has primitive direction vector of the form
\(
u_I=\sum_{j\in I} e_j
\)
up to sign.
Hence along every edge,
\(
\phi_k
\)
has derivative equal to
\(
0,
\, 
1,
\, \text{or}\, 
-1.
\)
Since the path is geodesic in a tree, the sign of the derivative of
\(
\phi_k
\)
changes at most once along the path.
Therefore the coordinate function
\(
t\mapsto \gamma_k(t)
\)
is monotone on the whole path after possibly reversing orientation.

Now consider two consecutive dominant exponents
\(
\alpha_{i-1}
\, \text{and}\quad
\alpha_i.
\)
Define
\[
h_i(t)
=
\langle \gamma(t),\alpha_i-\alpha_{i-1}\rangle
+
c_{\alpha_i}-c_{\alpha_{i-1}}.
\]
At the crossing point corresponding to
\(
x_i,
\)
the two monomials coincide:
\(
h_i(t_i)=0.
\)
Before the crossing,
\(
h_i(t)<0,
\)
while after the crossing,
\(
h_i(t)>0.
\)
Hence the derivative of
\(
h_i
\)
at the crossing must be positive:
\(
\frac{d}{dt}h_i(t_i)>0.
\)
Since
\[
\frac{d}{dt}h_i(t)
=
\left\langle
\gamma'(t),
\alpha_i-\alpha_{i-1}
\right\rangle,
\]
we obtain
\(
\left\langle
u_i,
v_i
\right\rangle
>0,
\)
where
\(
u_i
\)
is the primitive direction vector of the edge containing
\(
x_i.
\)

Now every coordinate of
\(
u_i
\)
belongs to
\(
\{-1,0,1\}.
\)
Because the coordinate functions of
\(
\Gamma
\)
are monotone, the signs of the nonzero coordinates of
\(
u_i
\)
are fixed along the path (see Lemma \ref{lemma:1}).
Therefore, for every fixed coordinate
\(
k,
\)
all nonzero values
\(
(v_i)_k
\)
must have the same sign.
Indeed, if two vectors
\(
v_i
\, \text{and}\quad
v_j
\)
had opposite signs in the same coordinate,
then the corresponding affine-linear comparison functions would force the same pair of monomials to exchange dominance twice along the geodesic path, contradicting the convexity argument proved earlier.
Hence there is no cancellation coordinatewise in the sum
\(\di
\sum_{i=1}^r v_i.
\)
Therefore
\[
\left\|
\sum_{i=1}^r v_i
\right\|_1
=
\sum_{i=1}^r \|v_i\|_1.
\]
Using the telescoping identity,
\[
\|\alpha_r-\alpha_0\|_1
=
\sum_{i=1}^r \|v_i\|_1.
\]
Combining this with
\(\di
r
\le
\sum_{i=1}^r \|v_i\|_1,
\)
we obtain
\(
r
\le
\|\alpha_r-\alpha_0\|_1.
\)
Since
\(
\alpha_0,\alpha_r\in\mathcal M,
\)
it follows that
\[
r
\le
\max_{\alpha,\beta\in\mathcal M}
\|\alpha-\beta\|_1.
\]
Therefore
\(
|L\cap H|
\le
\operatorname{diam}(\mathcal M).
\)
Since this inequality holds for every tropical line intersecting
\(
H
\)
transversally,
taking the supremum yields
\[
\mathbb T\operatorname{deg}(H)
\le
\operatorname{diam}(\mathcal M).
\]
%

%%%%%%%%%%%%%%%%%%%%%%%%%%%%%%%%%%%%%%%%%%%%%%%%%%%%%%%%%%%%%%%%%%%%%%%%

\section{Stable Intersections of Tropical Varieties and Tropical
Intersection Multiplicities}

Ordinary set-theoretic intersections are generally not appropriate in
tropical geometry because tropical varieties may intersect with excess
dimension. Two tropical hypersurfaces may share an entire polyhedral
cell even though a sufficiently small generic perturbation produces
only finitely many intersection points. Consequently, the ordinary
intersection fails to capture the correct intersection-theoretic
behavior. For this reason, tropical intersection theory uses the notion
of stable intersection, which plays the role of transverse intersection
in classical algebraic geometry.

In this paper we use the intrinsic definition of stable intersection
introduced by Jensen and Yu \cite{JY}. This definition is formulated
purely in terms of local tropical geometry and does not depend on any
choice of perturbation vector.
Let
$
N\simeq\mathbb Z^n
$
be a lattice and let
$
N_\mathbb R=N\otimes_\mathbb Z\mathbb R\simeq\mathbb R^n.
$
A tropical cycle in
$
N_\mathbb R
$
is a pure weighted balanced rational polyhedral complex.
A weighted rational polyhedral complex consists of a finite rational
polyhedral complex together with an integer weight
$
\omega(\sigma)\in\mathbb Z
$
attached to every maximal cell
$
\sigma.
$
If
$
X
$
is a tropical cycle and
$
\omega\in X,
$
the local link of
$
X
$
at
$
\omega
$
is the rational polyhedral fan
$$
\operatorname{link}_\omega(X)
=
\{
u\in N_\mathbb R:
\omega+\varepsilon u\in X
\text{ for all sufficiently small }
\varepsilon>0
\}.
$$
Geometrically,
$
\operatorname{link}_\omega(X)
$
records all infinitesimal tangent directions of the tropical cycle near
$
\omega.
$
Let
$
X,Y\subset N_\mathbb R
$
be tropical cycles.
Following \cite{JY}, the stable intersection of
$
X
$
and
$
Y
$
is defined by
$$
\operatorname{supp}(X\cdot Y)
=
\{
\omega\in N_\mathbb R:
\operatorname{supp}
(
\operatorname{link}_\omega(X)
-
\operatorname{link}_\omega(Y)
)
=
N_\mathbb R
\}.
$$
Here
$
\operatorname{link}_\omega(X)-\operatorname{link}_\omega(Y)
$
denotes the Minkowski difference
$
A-B=A+(-B).
$
Equivalently,
$
\omega
$
belongs to the stable intersection if and only if the local tangent
fans intersect after a sufficiently small generic perturbation.
This formulation is intrinsic and independent of any perturbation
vector.
The stable intersection carries natural multiplicities.
Let
$
\gamma
$
be a face of
$
X\cdot Y
$
having codimension
$$
\operatorname{codim}(\gamma)
=
\operatorname{codim}(X)
+
\operatorname{codim}(Y).
$$
The multiplicity of
$
\gamma
$
is defined as the multiplicity of the tropical cycle
$$
\operatorname{link}_\gamma(X)
-
\operatorname{link}_\gamma(Y).
$$
Equivalently, if
$
v\in N_\mathbb R
$
is sufficiently generic, then
$$
\operatorname{mult}_{X\cdot Y}(\gamma)
=
\sum_{\sigma,\tau}
\operatorname{mult}_X(\sigma)
\operatorname{mult}_Y(\tau)
[N:N_\sigma+N_\tau],
$$
where the sum runs over all facets
$
\sigma\subset\operatorname{link}_\gamma(X)
$
and
$
\tau\subset\operatorname{link}_\gamma(Y)
$
such that
$
v\in\sigma-\tau.
$
Here
$
N_\sigma
$
and
$
N_\tau
$
denote the tangent lattices
$$
N_\sigma
=
\operatorname{span}(\sigma-\sigma)\cap N,
\qquad
N_\tau
=
\operatorname{span}(\tau-\tau)\cap N.
$$
The quantity
$
[N:N_\sigma+N_\tau]
$
is a lattice index measuring the transversal lattice contribution of
the intersection.
Suppose now that
$
x\in X\cdot Y
$
is a transverse intersection point.
Let
$
\sigma\subset X
$
and
$
\tau\subset Y
$
be the maximal cells containing
$
x.
$
Then locally there is a unique pair of cells contributing to the
intersection multiplicity.
The local stable intersection multiplicity becomes
$$
m_x(\sigma,\tau)
=
\omega(\sigma)\omega(\tau)
[N:N_\sigma+N_\tau].
$$
If all maximal cells have weight
$
1,
$
for example in smooth tropical hypersurfaces or primitive tropical
subdivisions, the formula simplifies to
$$
m_x(\sigma,\tau)
=
[N:N_\sigma+N_\tau].
$$
Equivalently, let
$
u_1,\dots,u_{d_X}
$
be a lattice basis of
$
N_\sigma
$
and let
$
v_1,\dots,v_{d_Y}
$
be a lattice basis of
$
N_\tau.
$
Assume that
$
d_X+d_Y=n.
$
Then the vectors
$
u_1,\dots,u_{d_X},v_1,\dots,v_{d_Y}
$
form a basis of
$
N_\mathbb R.
$
Let
$
M
$
be the corresponding integer matrix.
Then
$$
[N:N_\sigma+N_\tau]
=
|\det(M)|.
$$
Hence
$$
m_x(\sigma,\tau)
=
\omega(\sigma)\omega(\tau)|\det(M)|.
$$
Thus stable intersection multiplicities are completely determined by
lattice indices of tangent lattices.
The stable intersection extends naturally to several tropical cycles.
If
$
X_1,\dots,X_k
$
are tropical cycles in
$
\mathbb R^n,
$
their stable intersection is defined recursively by
$$
X_1\cdot\cdots\cdot X_k
=
(X_1\cdot\cdots\cdot X_{k-1})\cdot X_k.
$$
When the intersections are transverse, the stable intersection agrees
with the ordinary set-theoretic intersection equipped with the lattice
multiplicities above.

Let's specialize to tropical hypersurfaces.
Let
$$
p_i(x)
=
\max_{\alpha\in\mathcal M_i}
(
\langle x,\alpha\rangle+c_{i,\alpha}
),
\qquad
i=1,\dots,k,
$$
be tropical polynomials with finite supports
$
\mathcal M_i\subset\mathbb Z^n.
$
The associated tropical hypersurface is
$$
V(p_i)
=
\{
x\in\mathbb R^n:
\text{the maximum is attained at least twice}
\}.
$$
Let
$
\Delta_i=\operatorname{conv}(\mathcal M_i)
$
be the Newton polytope of
$
p_i.
$
Assume that
$
X
=
V(p_1)\cdot\cdots\cdot V(p_k)
\subset\mathbb R^n
$
has codimension
$
r.
$

If
$
k=r,
$
then
$
X
$
is called a tropical complete intersection.
In this situation every equation contributes an independent tropical
normal direction.
Let
$
L\subset\mathbb R^n
$
be a generic tropical affine linear space of codimension
$
n-r.
$
Then
$
X\cdot L
$
is zero-dimensional.
Let
$
\Sigma
$
be the unimodular simplex associated to
$
L.
$
The tropical Bernstein theorem gives the exact formula
$$
\sum_{x\in X\cdot L}m(x)
=
n!\,
\operatorname{MV}
(
\Delta_1,
\dots,
\Delta_r,
\Sigma,
\dots,
\Sigma
),
$$
where
$
\Sigma
$
appears
$
n-r
$
times.
Dually, isolated stable intersection points correspond to fully mixed
cells in the mixed subdivision of
$
\Delta_1+\cdots+\Delta_r
+
\Sigma+\cdots+\Sigma.
$
The determinant formula for tropical multiplicities coincides with the
lattice volume of these mixed cells.

The situation changes substantially when
$
k>r.
$
In this case the tropical variety is not a complete intersection and
the defining equations are not globally independent from the
intersection-theoretic point of view.
Different points of
$
X
$
may correspond to different locally independent subsystems.
Consequently, there is generally no canonical subset of
$
r
$
equations determining the geometry globally, and therefore there is no
canonical global mixed-volume formula attached to a preferred
collection of Newton polytopes.
Nevertheless, mixed volumes still provide local upper bounds for
stable intersection multiplicities.
Indeed, let
$
x\in X\cdot L,
$
and assume $X$ is smooth at $x$.
Near
$
x,
$
the local normal space of
$
X
$
has dimension
$
r.
$
Hence there exist locally (see Proposition \ref{proposition:existence})
$
r
$
independent tropical hypersurfaces among
$
V(p_1),\dots,V(p_k).
$
Choose such a local subsystem
$
V(p_{i_1}),
\dots,
V(p_{i_r}).
$
Let
$
n_1,\dots,n_r
$
be the corresponding primitive local normal vectors, and let
$
m_1,\dots,m_{n-r}
$
be primitive normal vectors defining
$
L.
$
Then the local stable intersection multiplicity satisfies
$$
m(x)
\le
\left|
\det
(
n_1,
\dots,
n_r,
m_1,
\dots,
m_{n-r}
)
\right|.
$$
Dually, this determinant equals the lattice volume of a fully mixed
cell in the mixed subdivision of
$
\Delta_{i_1}
+\cdots+
\Delta_{i_r}
+
\Sigma+\cdots+\Sigma.
$
Consequently, tropical Bernstein theory yields the estimate
$$
m(x)
\le
n!\,
\operatorname{MV}
(
\Delta_{i_1},
\dots,
\Delta_{i_r},
\Sigma,
\dots,
\Sigma
).
$$
Since the local subsystem depends on the point
$
x,
$
these mixed volumes do not define intrinsic global intersection
numbers.
However, taking the maximum over all
$
r
$
-element subsets gives the Bézout-type estimate
$$
m(x)
\le
\max_{1\le i_1<\cdots<i_r\le k}
n!\,
\operatorname{MV}
(
\Delta_{i_1},
\dots,
\Delta_{i_r},
\Sigma,
\dots,
\Sigma
).
$$
Therefore, in the non-complete-intersection case,
mixed volumes appear naturally as local comparison quantities and
upper bounds, whereas the intrinsic geometry is governed by stable
intersection multiplicities defined through tangent lattices and
balanced tropical cycle structures.

\begin{proposition}\label{proposition:existence}
Let
$
X
=
V(p_1)\cap_{stb}\cdots\cap_{stb}V(p_k)
\subset\mathbb R^n
$
be a tropical variety of codimension
$
r.
$
Let
$
x\in X
$
be a smooth point of
$
X.
$
Then there exist indices
$
i_1,\dots,i_r
\in
\{1,\dots,k\}
$
such that the corresponding local primitive normal vectors
$
n_{i_1},\dots,n_{i_r}
\in\mathbb Z^n
$
are linearly independent.
Equivalently,
there exist
$
r
$
tropical hypersurfaces among
$
V(p_1),\dots,V(p_k)
$
whose local tangent hyperplanes intersect transversally and whose normal
spaces generate the normal space of
$
X
$
at
$
x.
$
\end{proposition}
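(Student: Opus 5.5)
Fix a smooth point $x\in X$ and let $\sigma$ be the unique maximal cell of $X$ whose relative interior contains $x$, so that $\operatorname{link}_x(X)$ is the linear space $T=\operatorname{span}(\sigma-\sigma)$ of dimension $n-r$. The plan is linear-algebraic: compare $T$ with the tangent data of the hypersurfaces $V(p_i)$ through $x$. First I would reduce to the case where $x$ lies in every relevant hypersurface. If $x\notin V(p_i)$, then near $x$ the stable intersection with $V(p_i)$ is empty, and $x$ could not lie in $X$. Hence $x\in V(p_i)$ for all $i$, and $\operatorname{link}_x(V(p_i))$ is a tropical fan. Using smoothness (or passing to a generic point of $\sigma$, which has the same normal space), I will assume that $x$ lies in the relative interior of a facet of each $V(p_i)$, or of a cell whose linear span contains $T$. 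Let $n_i$ be the primitive normal vector of that facet, i.e.\ the primitive vector in the direction $\alpha-\beta$ of the corresponding edge of the dual subdivision of $\Delta_i$.

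Second, I would show that $T\subseteq n_i^{\perp}$ for every $i$. This follows because the stable intersection is contained in the set-theoretic intersection: $\sigma\subset V(p_i)$ near $x$, so $T\subset\operatorname{link}_x V(p_i)$. When $x$ is in the relative interior of a facet, this link is exactly $n_i^\perp$. This gives $T\subseteq W^{\perp}$, where $W=\operatorname{span}(n_1,\dots,n_k)$, so $\dim W\le r$.

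Third, and this is the crux, I would prove the reverse inequality $\dim W\ge r$. I would argue by contradiction. Suppose $\dim W=s<r$. Then every local link $\operatorname{link}_x V(p_i)$ contains the linear space $W^\perp$ of dimension $n-s$. Now compute the iterated stable intersection of the local fans via the Jensen--Yu description: each factor is invariant under translation by $W^\perp$, so the stable intersection of the links is also $W^\perp$-invariant. Since $x\in X$, this intersection is nonempty near $x$, and therefore $\operatorname{link}_x(X)\supseteq W^\perp$. This forces $\dim T\ge n-s>n-r$, contradicting $\operatorname{codim}X=r$.

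The delicate point is that at non-generic $x$ some $V(p_i)$ may be locally more than a hyperplane, and its link is then not $W^\perp$-invariant in the naive sense. This is where smoothness of $x$ must be used: I would replace $x$ by a nearby generic point of $\sigma$, where each $V(p_i)$ is either locally a single hyperplane or contains $\sigma$ in a lower-dimensional cell. In the second case I would enlarge $W$ by the normal space of that cell. After this adjustment, $W=T^\perp$ has dimension exactly $r$. Since $W$ is spanned by the $n_i$, I then extract $r$ linearly independent vectors $n_{i_1},\dots,n_{i_r}$ by the basis extraction lemma. This gives the stated equivalent formulation: the corresponding tangent hyperplanes meet transversally along $T$, and their normals generate the normal space $T^\perp$ of $X$ at $x$. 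I expect this genericity/enlargement step, which handles hypersurfaces that are not locally smooth along $\sigma$, to be the main obstacle.
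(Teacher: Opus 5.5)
\paragraph{Where you agree with the paper.}
Your first three steps are the paper's argument. The paper also picks a facet $\sigma_j\ni x$ of each $V(p_j)$, shows $L_X\subseteq n_j^{\perp}$ (so $\operatorname{span}(n_1,\dots,n_k)\subseteq L_X^{\perp}$), and rules out $\dim\operatorname{span}(n_1,\dots,n_k)=s<r$ by contradiction. The paper only asserts that $\dim\bigcap_j n_j^{\perp}=n-s>n-r$ ``contradicts'' $\operatorname{codim}X=r$. Your argument supplies the missing justification: each $\operatorname{link}_xV(p_i)=n_i^{\perp}$ is $W^{\perp}$-invariant, and stable intersection commutes with links and translations, so the nonempty fan $\operatorname{link}_x(X)$ contains $W^{\perp}$. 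Some such argument is needed, because a stable intersection is not the set-theoretic intersection of chosen tangent hyperplanes. So when every $V(p_i)$ is locally a single facet at $x$, your proof is complete and agrees with the paper's.

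\paragraph{The gap.}
The gap is your last paragraph. It treats exactly the case the paper silently skips, by calling $n_j^{\perp}$ ``the'' tangent space of $V(p_j)$ at $x$ for an arbitrary facet through $x$.

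First, passing to a generic point of $\sigma$ does not remove the second case. The stable intersection of $V(\max(y_1,y_2,0))$ and $V(\max(y_2,0))$ in $\mathbb R^2$ is the single point $0$, so $\sigma$ is a point. In the stable self-intersection of a tropical plane in $\mathbb R^3$, every point of $X$ lies in the codimension-two skeleton of both factors.

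Second, once you enlarge $W$ by $N_i=\operatorname{span}(\rho_i-\rho_i)^{\perp}$, your invariance argument correctly gives $\sum_i N_i=T^{\perp}$. But $W$ is then no longer spanned by one normal vector per hypersurface. Extracting a basis from the union of all facet normals can return several normals of the same $V(p_i)$, and hence fewer than $r$ distinct indices. In the planar example it may return the normals $(1,-1)$ and $(1,0)$ of two edges of the tropical line, and nothing for $V(p_2)$.

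\paragraph{What is missing.}
You need a \emph{transversal} choice: facets $\tau_i\ni x$ of $r$ distinct hypersurfaces with independent primitive normals.

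The fan displacement rule gives this directly. Since $0$ lies in the stable intersection of the fans $\operatorname{link}_xV(p_i)$, for generic small $v_1,\dots,v_k$ the translates $\operatorname{link}_xV(p_i)+v_i$ have a common point. That point lies in the relative interior of exactly one facet of each translate, and these facets meet transversally. They are translates of cones over facets of the $V(p_i)$ containing $x$, so their primitive normals are linearly independent. This also reflects that a nonempty stable intersection of $k$ hypersurfaces has codimension exactly $k$, so in fact $r=k$.

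Alternatively, run your invariance argument on every subfamily $J\subseteq\{1,\dots,k\}$ to get $\dim\sum_{i\in J}N_i\ge|J|$, and conclude by Rado's theorem on independent transversals.
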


\begin{proof}
Since
$
x
$
is a smooth point of
$
X,
$
there exists a neighborhood
$
U
$
of
$
x
$
such that
$
X\cap U
$
is an affine polyhedral cell of dimension
$
n-r.
$
Let
$
L_X
=
\operatorname{span}(X-x)
\subset\mathbb R^n
$
be the tangent space of
$
X
$
at
$
x.
$
Since
$
\dim(X)=n-r,
$
we obtain
$
\dim(L_X)=n-r.
$
Hence the orthogonal complement
$
L_X^\perp
\subset
(\mathbb R^n)^\vee
$
has dimension
$
r.
$
For every
$
j\in\{1,\dots,k\},
$
consider the tropical hypersurface
$
V(p_j).
$
Since
$
x\in V(p_j),
$
there exists a facet
$
\sigma_j\subset V(p_j)
$
containing
$
x.
$
Since tropical hypersurfaces have codimension
$
1,
$
the facet
$
\sigma_j
$
is contained in an affine hyperplane
$
\langle n_j,y\rangle=\lambda_j,
$
where
$
n_j\in\mathbb Z^n
$
is the primitive integer normal vector of the facet.
Equivalently,
the tangent space of
$
V(p_j)
$
at
$
x
$
is
$$
T_xV(p_j)
=
\{
u\in\mathbb R^n:
\langle n_j,u\rangle=0
\}.
$$
Since
$
X
\subset
V(p_j),
$
every tangent vector to
$
X
$
is tangent to
$
V(p_j).
$
Therefore
$
L_X
\subset
T_xV(p_j).
$
Taking orthogonal complements gives
$
n_j
\in
L_X^\perp.
$
Thus all local primitive normal vectors
$
n_1,\dots,n_k
$
belong to the
$
r
$
-dimensional vector space
$
L_X^\perp.
$
We now prove that these vectors span
$
L_X^\perp.
$
Suppose by contradiction that
$
\operatorname{span}(n_1,\dots,n_k)
\subsetneq
L_X^\perp.
$
Then
$
\dim
\operatorname{span}(n_1,\dots,n_k)
<
r.
$
Let
$
s
=
\dim
\operatorname{span}(n_1,\dots,n_k).
$
Since
$
s<r,
$
the intersection of the tangent hyperplanes
$$
\bigcap_{j=1}^k
T_xV(p_j)
=
\bigcap_{j=1}^k
\{
u\in\mathbb R^n:
\langle n_j,u\rangle=0
\}
$$
has dimension
$
n-s
>
n-r.
$
This contradicts the assumption that
$
X
$
has codimension
$
r.
$
Hence there exist indices
$
i_1,\dots,i_r
$
such that
$
n_{i_1},\dots,n_{i_r}
$
are linearly independent.
Equivalently,
the corresponding tropical hypersurfaces
$
V(p_{i_1}),
\dots,
V(p_{i_r})
$
provide
$
r
$
independent local tropical normal directions.
This proves the proposition.
\end{proof}

\section{Mixed Subdivisions, Newton Polytopes, and Tropical
Bernstein Theory}

Stable intersections admit a dual interpretation in terms of Newton
polytopes and mixed subdivisions.
This duality is one of the fundamental bridges between tropical
geometry and convex geometry.
Let
$\di
p_i(x)
=
\max_{\alpha\in\mathcal M_i}
(
\langle x,\alpha\rangle+c_{i,\alpha}
),
\, 
i=1,\dots,k,
$
be tropical polynomials with Newton polytopes
$
\Delta_i.
$
The coefficients
$
c_{i,\alpha}
$
determine regular subdivisions of the Newton polytopes.
The common refinement of these subdivisions induces a regular mixed
subdivision of the Minkowski sum
$
\Delta_1+\cdots+\Delta_k.
$
The tropical hypersurface
$
V(p_i)
$
is dual to the regular subdivision of
$
\Delta_i.
$
More precisely, every cell of the tropical hypersurface corresponds
dually to a cell of complementary dimension in the subdivision of the
Newton polytope.
Suppose
$
X
=
V(p_1)\cdot\cdots\cdot V(p_r)
$
is a transverse tropical complete intersection of codimension
$
r.
$
Let
$
L
$
be a generic tropical affine linear space of codimension
$
n-r.
$
Then every isolated point of
$
X\cdot L
$
corresponds dually to a fully mixed cell in the mixed subdivision of
$
\Delta_1+\cdots+\Delta_r
+
\Sigma+\cdots+\Sigma,
$
where
$
\Sigma
$
is the unimodular simplex associated to
$
L.
$
A cell
$$
F_1+\cdots+F_r
+
G_1+\cdots+G_{n-r}
$$
is called fully mixed if
$$
\dim(F_1+\cdots+F_r+G_1+\cdots+G_{n-r})
=
\sum_{i=1}^r\dim(F_i)
+
\sum_{j=1}^{n-r}\dim(G_j).
$$
Geometrically, this condition means that all summands contribute
independent directions.
The tropical intersection is transverse precisely when the
corresponding mixed cell is fully mixed.
The lattice volume of the mixed cell equals the tropical stable
intersection multiplicity.
More precisely, if
$
x
$
is the tropical intersection point corresponding to the mixed cell,
then
$$
m(x)
=
\operatorname{Vol}_{\mathbb Z}
(
F_1+\cdots+F_r+G_1+\cdots+G_{n-r}
).
$$
%%%%%%%%%%%%
Equivalently, if
$
n_1,\dots,n_r
$
are primitive normal vectors of the tropical hypersurfaces and
$
m_1,\dots,m_{n-r}
$
are primitive normal vectors defining
$
L,
$
then
$$
m(x)
=
\left|
\det
(
n_1,\dots,n_r,m_1,\dots,m_{n-r}
)
\right|.
$$
Summing over all isolated stable intersection points gives the tropical
Bernstein theorem:
$$
\sum_x m(x)
=
n!\,
\operatorname{MV}
(
\Delta_1,
\dots,
\Delta_r,
\Sigma,
\dots,
\Sigma
).
$$
Thus mixed volume computes the total stable intersection
multiplicity of a tropical complete intersection (see Theorem \ref{thm:mixed-cell-multiplicity}).
This result is the tropical analogue of the classical Bernstein theorem
for algebraic hypersurfaces.
In both theories, mixed volume measures the total intersection number
of generic systems with prescribed Newton polytopes.
The geometric meaning is particularly transparent in tropical geometry.
The combinatorics of mixed subdivisions encode all local intersection
multiplicities, while the global mixed volume computes their total sum.

The non-complete-intersection case
$
k>r
$
is fundamentally different.
The tropical variety
$
X
=
V(p_1)\cdot\cdots\cdot V(p_k)
$
has codimension only
$
r,
$
so the defining hypersurfaces satisfy local tropical dependencies.
At a given point
$
x\in X,
$
one may choose a locally independent subsystem
$
V(p_{i_1}),
\dots,
V(p_{i_r}),
$
but this subsystem is not canonical and may vary from point to point.
Consequently, the mixed subdivision of
$
\Delta_1+\cdots+\Delta_k
$
does not determine canonical global mixed cells associated to the
intersection cycle.
Mixed volumes therefore cease to be intrinsic global invariants.
Nevertheless, every local transverse reduction yields a local mixed
cell and hence a local mixed-volume estimate.
If
$
x\in X\cdot L,
$
then for every locally independent subsystem one obtains
$$
m(x)
\le
n!\,
\operatorname{MV}
(
\Delta_{i_1},
\dots,
\Delta_{i_r},
\Sigma,
\dots,
\Sigma
).
$$
Taking the maximum over all
$
r
$
-element subsets gives the global Bézout-type bound
$$
\mathbb T\deg(X)
\le
\max_{1\le i_1<\cdots<i_r\le k}
n!\,
\operatorname{MV}
(
\Delta_{i_1},
\dots,
\Delta_{i_r},
\Sigma,
\dots,
\Sigma
).
$$
Thus, even in the non-complete-intersection setting, mixed volumes
still control tropical intersection multiplicities through local
transverse models and lattice-index estimates.

 \medskip
%%%%%%%%%%%%%%%%%%%%%%%%%%%%%%%%%%%%%%%%%%%%%%%%%%%%%%%%%%%%
 
\begin{theorem}\label{thm:mixed-cell-multiplicity}
Let
$
X
=
V(p_1)\cap_{stb}\cdots\cap_{stb}V(p_r)
\subset\mathbb R^n
$
be a transverse tropical complete intersection of codimension
$
r.
$
Let
$
L\subset\mathbb R^n
$
be a generic tropical affine linear space of codimension
$
n-r.
$
Let
$
x\in X\cap_{stb}L
$
be an isolated stable intersection point.
For every
$
i=1,\dots,r,
$
let
$
\sigma_i
$
be the facet of
$
V(p_i)
$
containing
$
x,
$
and let
$
n_i\in\mathbb Z^n
$
be its primitive integer normal vector.
Similarly,
let
$
\tau_1,\dots,\tau_{n-r}
$
be the facets of
$
L
$
meeting at
$
x,
$
and let
$
m_1,\dots,m_{n-r}\in\mathbb Z^n
$
be their primitive integer normal vectors.
Let
$$
C
=
F_1+\cdots+F_r+G_1+\cdots+G_{n-r}
$$
be the mixed cell in the dual mixed subdivision corresponding to
$
x.
$
Then
$
m(x)
=
\operatorname{Vol}_{\mathbb Z}(C).
$
Equivalently,
$$
m(x)
=
\left|
\det
(
n_1,\dots,n_r,m_1,\dots,m_{n-r}
)
\right|.
$$
\end{theorem}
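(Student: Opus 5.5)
The plan is to compute both sides of the claimed identity in terms of the primitive normal vectors $n_1,\dots,n_r,m_1,\dots,m_{n-r}$ and check that they agree.

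First, I identify the mixed cell $C$. By the duality between $V(p_i)$ and the regular subdivision of $\Delta_i$, the facet $\sigma_i\ni x$ is dual to an edge $F_i$ of that subdivision. $F_i$ is a lattice segment parallel to $n_i$, say $F_i=[\alpha,\alpha+w_i n_i]$, where $w_i=\omega(\sigma_i)$ is the lattice length of $F_i$. Likewise each facet $\tau_j$ of $L$ is dual to an edge $G_j$ of $\Sigma$ parallel to $m_j$. Since $\Sigma$ is unimodular, $G_j$ has lattice length $1$. Transversality means the $n$ directions are linearly independent, so $C$ is fully mixed. It is therefore the lattice parallelepiped spanned by $w_1n_1,\dots,w_rn_r,m_1,\dots,m_{n-r}$, and
$$\operatorname{Vol}_{\mathbb Z}(C)=\Bigl(\prod_{i=1}^r w_i\Bigr)\,\bigl|\det(n_1,\dots,n_r,m_1,\dots,m_{n-r})\bigr|.$$

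Second, I compute $m(x)$ from the Jensen--Yu definition recalled in Section 2. Since every intersection is transverse, at each stage of the recursive definition $X_1\cdot\cdots\cdot X_k=(X_1\cdot\cdots\cdot X_{k-1})\cdot X_k$ exactly one pair of cells contributes. The multiplicity is then the product of the weights $\prod_i w_i$ (the weights of $L$ being $1$) times a product of successive lattice indices $[N:N_{\sigma}+N_{\tau}]$. The key lattice lemma I would prove is the following.

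For a saturated sublattice $N_\rho\subset N$ with annihilator $N_\rho^\perp\subset N^\vee$, and a hyperplane lattice $N_\sigma=n^\perp\cap N$ transverse to it,
$$[N:N_\rho+N_\sigma]=\bigl[\,(N_\rho\cap N_\sigma)^\perp : N_\rho^\perp+\mathbb Z n\,\bigr].$$

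This is the standard dual-lattice identity $(A+B)^\perp=A^\perp\cap B^\perp$ together with the fact that index is preserved under dualizing finite-index inclusions of full-rank lattices. Iterating the identity telescopes the product of indices into $[N^\vee:\mathbb Z n_1+\cdots+\mathbb Z n_r+\mathbb Z m_1+\cdots+\mathbb Z m_{n-r}]$, which equals $|\det(n_1,\dots,n_r,m_1,\dots,m_{n-r})|$. Comparing with the first step gives $m(x)=\operatorname{Vol}_{\mathbb Z}(C)$. When all weights are $1$ (the primitive or smooth situation implicit in the statement, as in Section 2), this also gives the determinant form. In general the second displayed equality should carry the factor $\prod_i w_i$, and I will state it that way.

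The main obstacle is the telescoping step. In the iteration, the intermediate tangent lattices $N_\rho$ at the partial intersections $V(p_1)\cdot\cdots\cdot V(p_s)$ are saturated, but the partial sums $\mathbb Z n_1+\cdots+\mathbb Z n_s$ need not be. Their saturations are exactly what the indices $[N:N_\rho+N_\sigma]$ correct for. I would handle this by writing each index as a ratio of covolumes, namely the covolume of $\mathbb Z n_1+\cdots+\mathbb Z n_{s+1}$ inside its saturation divided by that of $\mathbb Z n_1+\cdots+\mathbb Z n_s$. The product then telescopes to the covolume of the full normal lattice, which is $|\det|$.
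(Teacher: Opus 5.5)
Your proposal is correct, and for the multiplicity it takes a genuinely different and sounder route than the paper.

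\textbf{Volume side.} Both arguments agree that the dual edges are lattice segments parallel to the $n_i$ and $m_j$, so $C$ is a parallelotope. The paper, however, writes $F_i=[u_i,u_i+n_i]$, tacitly assuming every dual edge has lattice length one. You keep the weights $w_i=\omega(\sigma_i)$ and correctly observe that in general $m(x)=\operatorname{Vol}_{\mathbb Z}(C)=\bigl(\prod_i w_i\bigr)\,|\det(n_1,\dots,n_r,m_1,\dots,m_{n-r})|$. For example, with $p=\max(2x,0)$ on $\mathbb R$ one has $m=\operatorname{Vol}_{\mathbb Z}([0,2])=2$ but $|\det(n_1)|=1$.

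\textbf{Multiplicity side.} The paper defines $m(x)$ as the single index $[\mathbb Z^n:N_{\sigma_1}+\cdots+N_{\sigma_r}+N_{\tau_1}+\cdots+N_{\tau_{n-r}}]$. It then asserts that the kernel conditions make this equal to $[\mathbb Z^n:\Phi(\mathbb Z^n)]=|\det M|$. That identification holds when only two hyperplane lattices are involved, but not in general. Take $n_1=e_1$, $n_2=(0,2,1)$, $m_1=e_3$ in $\mathbb Z^3$. Already $N_{\sigma_1}+N_{\tau_1}=\mathbb Z^3$, so that index is $1$, while $|\det(n_1,n_2,m_1)|=2$. The value $2$ is what the recursive Jensen--Yu multiplicity gives: $[N:N_{\sigma_1}+N_{\sigma_2}]=1$, then $[N:\mathbb Z(0,1,-2)+N_{\tau_1}]=2$.

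You avoid this error in three steps:
\begin{enumerate}
\item You apply the two-factor formula at each stage of the recursion.
\item You convert each index via $[N:A+B]=[(A\cap B)^\perp:A^\perp+B^\perp]$. This is valid: dualize the finite-cokernel inclusion $N/(A\cap B)\hookrightarrow N/A\oplus N/B$, whose cokernel is $N/(A+B)$.
\item You telescope the saturation defects. Write $\nu_1,\dots,\nu_n$ for $n_1,\dots,n_r,m_1,\dots,m_{n-r}$, let $S_s=\mathbb Z\nu_1+\cdots+\mathbb Z\nu_s$, and let $\overline S_s$ be its saturation, i.e. the annihilator of the partial tangent lattice. Then $[\overline S_{s+1}:S_{s+1}]=[\overline S_{s+1}:\overline S_s+\mathbb Z\nu_{s+1}]\,[\overline S_s:S_s]$, which collapses to $[N^\vee:S_n]=|\det|$.
\end{enumerate}

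In effect you prove that the stable multiplicity equals the index of the paper's map $\Phi$, which is exactly the step the paper needs but does not justify. The paper's argument is shorter, but yours is the one that remains valid for $n\ge 3$ and for facets of weight greater than one.
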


\begin{proof}
We first recall the duality between tropical hypersurfaces and regular
subdivisions of Newton polytopes.
For every tropical polynomial
$\di
p_i(x)
=
\max_{\alpha\in\mathcal M_i}
(
\langle x,\alpha\rangle+c_{i,\alpha}
),
$
its coefficients determine a regular subdivision of the Newton polytope
$
\Delta_i=\operatorname{conv}(\mathcal M_i).
$
A facet
$
\sigma_i\subset V(p_i)
$
is dual to an edge
$
F_i\subset\Delta_i.
$
More precisely,
there exist lattice points
$
u_i,v_i\in\mathbb Z^n
$
such that
$
F_i=[u_i,v_i].
$
The primitive direction vector of the edge is
$
v_i-u_i.
$
The tropical facet
$
\sigma_i
$
is contained in the affine hyperplane
$
\langle v_i-u_i,x\rangle=\lambda_i
$
for some constant
$
\lambda_i.
$
Hence the primitive normal vector of
$
\sigma_i
$
is
$
n_i=v_i-u_i.
$

Similarly,
the tropical affine linear space
$
L
$
is dual to a subdivision of the unimodular simplex
$
\Sigma.
$
Each facet
$
\tau_j
$
corresponds to an edge
$
G_j
$
whose primitive direction vector is
$
m_j.
$
The isolated stable intersection point
$
x
$
corresponds dually to the mixed cell
$$
C
=
F_1+\cdots+F_r+G_1+\cdots+G_{n-r}.
$$
Since the intersection is transverse,
the vectors
$
n_1,\dots,n_r,m_1,\dots,m_{n-r}
$
are linearly independent.
Therefore the mixed cell
$
C
$
has dimension
$
n.
$

\noindent {\it Computation of  its lattice volume.}
Each edge
$
F_i
$
is a segment of the form
$
F_i
=
[u_i,u_i+n_i].
$
Similarly,
each edge
$
G_j
$
is a segment of the form
$
G_j
=
[w_j,w_j+m_j].
$
Therefore the Minkowski sum
$
C
$
is
$$
C
=
u+w+
\left\{
\sum_{i=1}^r t_i n_i
+
\sum_{j=1}^{n-r}s_jm_j:
0\le t_i\le1,\,
0\le s_j\le1
\right\},
$$
where
$
u=u_1+\cdots+u_r,
\,$ and $
w=w_1+\cdots+w_{n-r}.
$
Hence
$
C
$
is an
$
n
$
-dimensional parallelotope generated by the vectors
$
n_1,\dots,n_r,m_1,\dots,m_{n-r}.
$
The normalized lattice volume of an
$
n
$
-dimensional parallelotope generated by vectors
$
a_1,\dots,a_n\in\mathbb Z^n
$
is by definition
$$
\operatorname{Vol}_{\mathbb Z}
(
P(a_1,\dots,a_n)
)
=
|\det(a_1,\dots,a_n)|.
$$
Applying this to the present parallelotope gives
$$
\operatorname{Vol}_{\mathbb Z}(C)
=
\left|
\det
(
n_1,\dots,n_r,m_1,\dots,m_{n-r}
)
\right|.
$$
We now compute the tropical stable intersection multiplicity.
For every
$
i,
$
the tangent lattice of the facet
$
\sigma_i
$
is
$
N_{\sigma_i}
=
\{
u\in\mathbb Z^n:
\langle n_i,u\rangle=0
\}.
$
Similarly,
the tangent lattice of
$
\tau_j
$
is
$
N_{\tau_j}
=
\{
u\in\mathbb Z^n:
\langle m_j,u\rangle=0
\}.
$
The local stable intersection multiplicity is defined by the lattice
index
$$
m(x)
=
[
\mathbb Z^n:
N_{\sigma_1}
+\cdots+
N_{\sigma_r}
+
N_{\tau_1}
+\cdots+
N_{\tau_{n-r}}
].
$$
Define the homomorphism
$
\Phi:\mathbb Z^n\to\mathbb Z^n
$
by
$$
\Phi(u)
=
(
\langle n_1,u\rangle,
\dots,
\langle n_r,u\rangle,
\langle m_1,u\rangle,
\dots,
\langle m_{n-r},u\rangle
).
$$
The matrix of
$
\Phi
$
with respect to the standard basis is
$
M^T,
$
where
$$
M
=
(
n_1,\dots,n_r,m_1,\dots,m_{n-r}
).
$$
Since the vectors are linearly independent,
$
M
$
is invertible over
$
\mathbb R.
$
Hence
$
\Phi(\mathbb Z^n)
$
is a full-rank sublattice of
$
\mathbb Z^n.
$
By the classical determinant-index theorem for lattices,
$$
[\mathbb Z^n:\Phi(\mathbb Z^n)]
=
|\det(M)|.
$$
On the other hand,
the kernel conditions defining the tangent lattices imply that
$$
[
\mathbb Z^n:
N_{\sigma_1}
+\cdots+
N_{\sigma_r}
+
N_{\tau_1}
+\cdots+
N_{\tau_{n-r}}
]
=
[\mathbb Z^n:\Phi(\mathbb Z^n)].
$$
Therefore
$
m(x)
=
|\det(M)|.
$
Combining the determinant formula with the previous volume computation
gives
$
m(x)
=
\operatorname{Vol}_{\mathbb Z}(C).
$
Equivalently,
$$
m(x)
=
\left|
\det
(
n_1,\dots,n_r,m_1,\dots,m_{n-r}
)
\right|.
$$

%This proves the theorem.
\end{proof}

%%%%%%%%%%%%%%%%%%%%%%%%%%%%%%%%%%%%%%%%%%%%%%%%%%%%%%%%%%%%

 %%%%%%%%%%%%%%%%%%%%%%%%%%%%%%%%%%%%%%%%%%%%%%%%%%%%%%%%%%%%%%%%%%%%%%%% 

\section{Tropical Bernstein-type Theorem and Mixed Cell Multiplicities}

In this section we establish the tropical Bernstein theorem for
transverse stable intersections of tropical hypersurfaces.
We show that the local stable intersection multiplicity at an isolated
intersection point is determined by the lattice index associated with
the primitive normal vectors of the intersecting facets.
Using the duality between tropical hypersurfaces and regular
subdivisions of Newton polytopes, we identify these multiplicities with
the lattice volumes of fully mixed cells in the corresponding mixed
subdivision.
Summing over all intersection points then yields the classical mixed
volume formula for the total stable intersection multiplicity.

  \begin{theorem}
Let
\(\di
p_i(x)
=
\max_{\alpha\in A_i}
\bigl(
\langle \alpha,x\rangle+c_{i,\alpha}
\bigr),
\qquad i=1,\dots,r,
\)
be tropical polynomials on
\(
\mathbb R^r,
\)
with Newton polytopes
\(
\Delta_i
=
\operatorname{conv}(A_i).
\)
Assume that the tropical hypersurfaces
\(
V(p_1),\dots,V(p_r)
\)
intersect transversally in finitely many points.
Let
\(
x\in
V(p_1)\cap_{stb}\cdots\cap_{stb} V(p_r)
\)
be an isolated stable intersection point.
For every
\(
i=1,\dots,r,
\)
let
\(
\sigma_i\subset V(p_i)
\)
be the facet containing
\(
x,
\)
and let
\(
E_i=[u_i,v_i]\subset\Delta_i
\)
be the dual edge.
Let
\(
n_i=v_i-u_i\in\mathbb Z^r
\)
be the primitive integer normal vector to
\(
\sigma_i.
\)
Then the local stable intersection multiplicity at
\(
x
\)
is
\[
m(x)
=
\left|
\det(n_1,\dots,n_r)
\right|.
\]
Moreover, the parallelotope
\(
E_1+\cdots+E_r
\)
is a fully mixed cell in the mixed subdivision of
\(
\Delta_1+\cdots+\Delta_r,
\)
and its lattice volume equals
\(
\left|
\det(n_1,\dots,n_r)
\right|.
\)
Consequently,
\[
\sum_x m(x)
=
r!\,
\operatorname{MV}
(
\Delta_1,\dots,\Delta_r
),
\]
where the sum runs over all isolated stable intersection points.
\end{theorem}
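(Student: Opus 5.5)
This is the special case $n=r$ of Theorem \ref{thm:mixed-cell-multiplicity}, where no auxiliary linear space $L$ is needed. The proof has three parts: a local lattice-index computation, the identification with mixed cells, and a global summation.

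\emph{Local multiplicity.} At a transverse point $x$, each $V(p_i)$ is locally the single facet $\sigma_i$ with weight $1$. Its tangent lattice is $N_{\sigma_i}=\{u\in\mathbb Z^r:\langle n_i,u\rangle=0\}$. By the Jensen--Yu description recalled in Section 2, applied recursively, the local multiplicity is a product of lattice indices. I would compute it directly from the map $\Phi(u)=(\langle n_1,u\rangle,\dots,\langle n_r,u\rangle)$, exactly as in the proof of Theorem \ref{thm:mixed-cell-multiplicity}, and use the determinant-index theorem to get $[\mathbb Z^r:\Phi(\mathbb Z^r)]=|\det(n_1,\dots,n_r)|$.

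One caveat: for codimension-one cycles the correct index is the covolume of the normal lattice, not the naive index of the sum of the $N_{\sigma_i}$. Indeed, the $N_{\sigma_i}$ generally sum to all of $\mathbb Z^r$. So I would phrase the recursion as follows. The partial intersection $V(p_1)\cdot\ldots\cdot V(p_j)$ near $x$ has tangent lattice $\bigcap_{i\le j}N_{\sigma_i}$ and weight $[\Lambda_j^{\mathrm{sat}}:\Lambda_j]$, where $\Lambda_j=\mathbb Z n_1+\cdots+\mathbb Z n_j$. Each step multiplies the weight by the appropriate index, and the product telescopes to $[\mathbb Z^r:\Lambda_r]=|\det(n_1,\dots,n_r)|$. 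Here the primitivity of $n_i=v_i-u_i$ is used: I take $E_i$ to be a primitive edge, or else absorb its lattice length into the facet weight.

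\emph{Mixed cell.} By the duality recalled in Section 3, the polyhedron of the common refinement containing $x$ is dual to the cell $E_1+\cdots+E_r$ of the regular mixed subdivision of $\Delta_1+\cdots+\Delta_r$ induced by the lifts $c_{i,\alpha}$. Transversality means that the $n_i$ are linearly independent. Hence $\dim(E_1+\cdots+E_r)=r=\sum\dim E_i$, so the cell is a fully mixed parallelotope. Its lattice volume is $|\det(n_1,\dots,n_r)|$, as computed in Theorem \ref{thm:mixed-cell-multiplicity}.

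\emph{Summation.} I would use the standard fact (Huber--Sturmfels) that, in any fine mixed subdivision of $\Delta_1+\cdots+\Delta_r$, $r!\,\operatorname{MV}(\Delta_1,\dots,\Delta_r)$ equals the sum of the lattice volumes of the cells $C_1+\cdots+C_r$ with all $\dim C_i=1$. The proof goes through the polynomial $\lambda\mapsto\operatorname{Vol}(\lambda_1\Delta_1+\cdots+\lambda_r\Delta_r)$: each cell $C_1+\cdots+C_r$ scales as $\prod_i\lambda_i^{\dim C_i}$, and extracting the coefficient of $\lambda_1\cdots\lambda_r$ leaves only the mixed cells.

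The main obstacle is the bijection between isolated points of $\bigcap V(p_i)$ and the fully mixed cells of type $(1,\dots,1)$. Every such cell $E_1+\cdots+E_r$ has a dual point, namely the point where all $r$ pairs of monomials tie. Conversely, a transverse point lies in the relative interiors of facets, so it gives exactly such a cell. Genericity is needed to ensure that the subdivision is fine, i.e.\ that there are no mixed cells with some $\dim C_i\ge2$ paired with $\dim C_j=0$. I would argue that this is forced by the hypothesis that the hypersurfaces meet transversally in finitely many points. Failing that, I would perturb the coefficients slightly, which changes neither the transverse points nor their multiplicities, and appeal to the invariance of stable intersection.
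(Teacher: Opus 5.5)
Your proposal is correct and has the same three-part structure as the paper's proof: local determinant, parallelotope volume, and the coefficient of $\lambda_1\cdots\lambda_r$. The local step, however, is genuinely different, and your version is the sound one.

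The paper takes $m(x)=[\mathbb Z^r:N_{\sigma_1}+\cdots+N_{\sigma_r}]$ as the definition and asserts that this index equals $[\mathbb Z^r:\Phi(\mathbb Z^r)]$. That identification is false for $r\ge 3$ (it is fine for $r=2$), exactly as you suspected. For $p_1=\max(0,x_1)$, $p_2=\max(0,x_2)$, $p_3=\max(0,x_1+x_2+2x_3)$ one already has $N_{\sigma_1}+N_{\sigma_2}=\mathbb Z^3$, whereas $|\det(n_1,n_2,n_3)|=2=3!\operatorname{MV}(\Delta_1,\Delta_2,\Delta_3)$. Your recursive use of the two-cycle formula is what actually produces the determinant.

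The telescoping you only sketch has a short proof that lands exactly on the paper's $\Phi$. Let $N'_j=\bigcap_{i<j}N_{\sigma_i}$ be the tangent lattice of the partial intersection. Primitivity of $n_j$ identifies $\mathbb Z^r/N_{\sigma_j}$ with $\mathbb Z$, so the $j$-th step contributes $[\mathbb Z^r:N'_j+N_{\sigma_j}]=[\mathbb Z:\langle n_j,N'_j\rangle]$. Filtering $\Phi(\mathbb Z^r)=\Phi(N'_1)\supseteq\Phi(N'_2)\supseteq\cdots$ and projecting onto successive coordinates then gives $[\mathbb Z^r:\Phi(\mathbb Z^r)]=\prod_j[\mathbb Z:\langle n_j,N'_j\rangle]$.

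In the summation step you follow the paper, but your fineness concern is aimed at the wrong cells and is not needed.

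\begin{itemize}
\item The cells you want genericity to exclude, those with some $\dim C_i\ge 2$ and some $\dim C_j=0$, occur in fine mixed subdivisions too. An example is a cell dual to a vertex of $V(p_1)$ lying off the other hypersurfaces.
\item Such cells are harmless. If $C_j$ is a point, $\operatorname{Vol}(\lambda_1C_1+\cdots+\lambda_rC_r)$ does not involve $\lambda_j$, so it contributes nothing to the coefficient of $\lambda_1\cdots\lambda_r$.
\item A maximal cell with every $\dim C_i\ge 1$ is dual to a point of $V(p_1)\cap\cdots\cap V(p_r)$. By hypothesis that point is transverse, so the cell is $E_1+\cdots+E_r$ with independent edge directions.
\item Since $\lambda_ip_i$ has the same corner locus as $p_i$, the subdivision of $\lambda_1\Delta_1+\cdots+\lambda_r\Delta_r$ induced by the scaled lifts has cells $\lambda_1C_1+\cdots+\lambda_rC_r$.
\end{itemize}

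So the formula follows directly from the hypothesis, with no genericity.

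Do not claim that fineness itself is forced by the hypothesis. Take $p_1=\max(0,x_1)$, $p_2=\max(1,2x_1+1,x_1\pm x_2,x_1\pm x_3)$ and $p_3=\max(0,x_2-2)$. Then $V(p_1)$ and $V(p_2)$ share the square $\{0\}\times[-1,1]^2$, giving non-fine cells. Yet the triple intersection consists of the two transverse points $(0,2,\pm 2)$.

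Your perturbation fallback is valid, but it would need a separate argument that no new intersection points appear. The direct argument above avoids this.
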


\begin{proof}
Fix an isolated stable intersection point
\(
x
\in
V(p_1)\cap_{stb}\cdots\cap_{stb} V(p_r).
\)
For every
\(
i,
\)
the point
\(
x
\)
belongs to a unique facet
\(
\sigma_i\subset V(p_i)
\)
because the intersection is assumed transverse.
Since
\(
\sigma_i
\)
is a facet of a tropical hypersurface in
\(
\mathbb R^r,
\)
its dimension is
\(
r-1.
\)
By the duality between tropical hypersurfaces and regular subdivisions
of Newton polytopes,
the facet
\(
\sigma_i
\)
is dual to an edge
\(
E_i=[u_i,v_i]
\subset\Delta_i.
\)

Explicit description of the facet
\(
\sigma_i :
\)
the tropical polynomial
\(\di
p_i(x)
=
\max_{\alpha\in A_i}
(
\langle \alpha,x\rangle+c_{i,\alpha}
)
\)
is linear on every connected component of the complement of
\(
V(p_i).
\)
At points of the facet
\(
\sigma_i,
\)
exactly two monomials attain the maximum:
the monomials corresponding to
\(
u_i
\, \text{and}\, 
v_i.
\)
Hence
\(
\sigma_i
\)
is contained in the affine hyperplane
\(
\langle u_i,x\rangle+c_{i,u_i}
=
\langle v_i,x\rangle+c_{i,v_i}.
\)
Rearranging gives
\[
\langle v_i-u_i,x\rangle
=
c_{i,u_i}-c_{i,v_i}.
\]
Thus the facet
\(
\sigma_i
\)
has primitive integer normal vector
\(
n_i=v_i-u_i.
\)
The tangent space of
\(
\sigma_i
\)
is therefore
\[
T_x\sigma_i
=
\{
y\in\mathbb R^r:
\langle n_i,y\rangle=0
\}.
\]
Since the stable intersection is transverse and zero-dimensional,
the normals
\(
n_1,\dots,n_r
\)
are linearly independent.
Hence the matrix
\(
N
=
(n_1,\dots,n_r)
\)
is invertible.

Computation of the local stable multiplicity:
for every
\(
i,
\)
let
\(
N_{\sigma_i}
=
T_x\sigma_i\cap\mathbb Z^r.
\)
The local stable multiplicity is by definition the lattice index
\[
m(x)
=
\bigl[
\mathbb Z^r:
N_{\sigma_1}+\cdots+N_{\sigma_r}
\bigr].
\]
We claim that this index equals
\(
|\det(N)|
\) (for the general case see Theorem \ref{thm:index-determinant}).
Consider the homomorphism
\(
\Phi:\mathbb Z^r\to\mathbb Z^r
\)
defined by
\[
\Phi(y)
=
(
\langle n_1,y\rangle,
\dots,
\langle n_r,y\rangle
).
\]
The matrix of
\(
\Phi
\)
with respect to the standard basis is precisely
\(
N^T.
\)
The kernel of the
\(
i
\)-th coordinate functional is
\(
N_{\sigma_i}.
\)
Hence
\(
N_{\sigma_1}+\cdots+N_{\sigma_r}
\)
is the kernel lattice determined by the simultaneous tangent directions.
Since
\(
N
\)
is invertible,
the cokernel of
\(
\Phi
\)
is finite and its cardinality equals
\(
|\det(N)|.
\)
Therefore
\[
m(x)
=
|\det(N)|
=
\left|
\det(n_1,\dots,n_r)
\right|.
\]
We now explain the dual mixed cell.
Each edge
\(
E_i=[u_i,v_i]
\)
can be written as
\(
E_i
=
u_i+[0,1](v_i-u_i).
\)
Hence the Minkowski sum
\(
E_1+\cdots+E_r
\)
is
\[
u_1+\cdots+u_r
+
\left\{
\lambda_1n_1+\cdots+\lambda_r n_r:
0\le\lambda_i\le1
\right\}.
\]
Thus
\(
E_1+\cdots+E_r
\)
is the parallelotope generated by the vectors
\(
n_1,\dots,n_r.
\)
Its Euclidean volume is therefore
\[
\operatorname{Vol}
(
E_1+\cdots+E_r
)
=
\left|
\det(n_1,\dots,n_r)
\right|.
\]
Since the vectors
\(
n_1,\dots,n_r
\)
are linearly independent,
the dimension of the Minkowski sum equals
\(
r.
\)
Therefore
\(
E_1+\cdots+E_r
\)
is a fully mixed
\(
r
\)-dimensional cell in the mixed subdivision of
\(
\Delta_1+\cdots+\Delta_r.
\)

Sum over all stable intersection points:
every isolated stable intersection point corresponds uniquely to a fully
mixed cell of the mixed subdivision, and vice versa.
Thus
\[
\sum_x m(x)
=
\sum_C \operatorname{Vol}(C),
\]
where the sum runs over all fully mixed
\(
r
\)-cells.

Connection this with mixed volume:
for nonnegative real numbers
\(
\lambda_1,\dots,\lambda_r,
\)
consider the Minkowski combination
\(
P(\lambda_1,\dots,\lambda_r)
=
\lambda_1\Delta_1+\cdots+\lambda_r\Delta_r.
\)
Minkowski's theorem states that
\(
\operatorname{Vol}(P(\lambda_1,\dots,\lambda_r))
\)
is a homogeneous polynomial of degree
\(
r.
\)
More precisely,
\[
\operatorname{Vol}
(
\lambda_1\Delta_1+\cdots+\lambda_r\Delta_r
)
=
\sum_{i_1,\dots,i_r}
c_{i_1,\dots,i_r}
\lambda_{i_1}\cdots\lambda_{i_r}.
\]
The mixed volume is defined by
\[
\operatorname{MV}
(
\Delta_1,\dots,\Delta_r
)
=
\frac1{r!}
\frac{\partial^r}
{\partial\lambda_1\cdots\partial\lambda_r}
\operatorname{Vol}
(
\lambda_1\Delta_1+\cdots+\lambda_r\Delta_r
).
\]
Equivalently,
\(
r!\,
\operatorname{MV}
(
\Delta_1,\dots,\Delta_r
)
\)
is the coefficient of the monomial
\(
\lambda_1\cdots\lambda_r.
\)
In the mixed subdivision,
every fully mixed cell contributes exactly one monomial
\(
\lambda_1\cdots\lambda_r
\)
to the volume polynomial, with coefficient equal to its lattice volume.
Therefore
\[
r!\,
\operatorname{MV}
(
\Delta_1,\dots,\Delta_r
)
=
\sum_C \operatorname{Vol}(C).
\]
Combining this with the previous equality yields
\[
\sum_x m(x)
=
r!\,
\operatorname{MV}
(
\Delta_1,\dots,\Delta_r
).
\]
\end{proof}

%%%%%%%%%%%%%%%%%%%%%%%%%%%%%%%%%%%%%%%%%%%%%%%%%%%%%%%%%%%%%%%%%%%%%%%%%%%%%

\section{Lattice-Theoretic Interpretation of Tropical Intersection Multiplicities}

In this section we establish the lattice-theoretic foundation of
tropical intersection multiplicities.
Given a transverse intersection point of tropical hypersurfaces, we
associate a linear map determined by the primitive normal vectors of the
corresponding facets.
We then identify the tangent lattices as kernels of suitable linear
functionals and prove that the local stable intersection multiplicity is
equal to the lattice index determined by these tangent lattices.
Equivalently, the multiplicity is given by the absolute value of the
determinant of the matrix formed by the primitive normal vectors.
This provides the precise link between tropical intersection theory,
lattice indices, and determinant formulas arising from Newton polytope
geometry.

Let
\(
n_1,\dots,n_r\in\mathbb Z^r
\)
be the primitive integer normal vectors to the facets
\(
\sigma_1,\dots,\sigma_r
\)
of the tropical hypersurfaces
\(
V(p_1),\dots,V(p_r)
\)
meeting transversally at a point
\(
x.
\)
Write
\(
n_i
=
(n_{i1},\dots,n_{ir}).
\)
Define the linear map
\(
\Phi:\mathbb R^r\to\mathbb R^r
\)
by
\[
\Phi(y)
=
(
\langle n_1,y\rangle,
\dots,
\langle n_r,y\rangle
).
\]
Explicitly, if
\(
y=(y_1,\dots,y_r),
\)
then
\[
\Phi(y)
=
\Big(
\sum_{j=1}^r n_{1j}y_j,
\dots,
\sum_{j=1}^r n_{rj}y_j
\Big).
\]
We now compute the matrix of
\(
\Phi.
\)
Let
\(
e_1,\dots,e_r
\)
be the standard basis of
\(
\mathbb R^r.
\)
For every
\(
j=1,\dots,r,
\)
we have
\(
\Phi(e_j)
=
(n_{1j},\dots,n_{rj}).
\)
Indeed,
\(
\langle n_i,e_j\rangle
=
n_{ij}.
\)
Therefore the
\(
j
\)-th column of the matrix of
\(
\Phi
\)
is
\(
(n_{1j},\dots,n_{rj})^T.
\)
Hence the matrix of
\(
\Phi
\)
with respect to the standard basis is
\[
\begin{pmatrix}
n_{11} & \cdots & n_{1r}\\
\vdots & \ddots & \vdots\\
n_{r1} & \cdots & n_{rr}
\end{pmatrix}.
\]
Now define the matrix
\(
N=(n_1,\dots,n_r),
\)
whose columns are the vectors
\(
n_1,\dots,n_r.
\)
Thus
\[
N=
\begin{pmatrix}
n_{11} & \cdots & n_{r1}\\
\vdots & \ddots & \vdots\\
n_{1r} & \cdots & n_{rr}
\end{pmatrix}.
\]
Consequently,
the matrix of
\(
\Phi
\)
is
\(
N^T.
\)
We now explain the statement about kernels.
For every
\(
i=1,\dots,r,
\)
consider the
\(
i
\)-th coordinate function
\(
\pi_i:\mathbb R^r\to\mathbb R,
\, 
\pi_i(z_1,\dots,z_r)=z_i.
\)
The composition
\(
\pi_i\circ\Phi:\mathbb R^r\to\mathbb R
\)
is therefore
\(
(\pi_i\circ\Phi)(y)
=
\langle n_i,y\rangle.
\)
Indeed,
\[
\Phi(y)
=
(
\langle n_1,y\rangle,
\dots,
\langle n_r,y\rangle
),
\]
so its
\(
i
\)-th coordinate is exactly
\(
\langle n_i,y\rangle.
\)
The kernel of the linear functional
\(
\pi_i\circ\Phi
\)
is therefore
\[
\ker(\pi_i\circ\Phi)
=
\{
y\in\mathbb R^r:
\langle n_i,y\rangle=0
\}.
\]
We now relate this to the tangent space of the tropical facet
\(
\sigma_i.
\)
Recall that
\(
\sigma_i
\)
is contained in the affine hyperplane
\(
\langle n_i,x\rangle=\lambda_i
\)
for some constant
\(
\lambda_i.
\)
Indeed,
if
\(
E_i=[u_i,v_i]
\)
is the dual edge in the Newton polytope,
then on the facet
\(
\sigma_i
\)
the two corresponding monomials are equal:
\[
\langle u_i,x\rangle+c_{u_i}
=
\langle v_i,x\rangle+c_{v_i}.
\]
Rearranging gives
\(
\langle v_i-u_i,x\rangle
=
c_{u_i}-c_{v_i}.
\)
Since
\(
n_i=v_i-u_i,
\)
we obtain
\(
\langle n_i,x\rangle=\lambda_i.
\)
Therefore the tangent space of
\(
\sigma_i
\)
is the vector hyperplane
\[
T_x\sigma_i
=
\{
y\in\mathbb R^r:
\langle n_i,y\rangle=0
\}.
\]
Intersecting with the lattice
\(
\mathbb Z^r
\)
gives the tangent lattice
\(
N_{\sigma_i}
=
T_x\sigma_i\cap\mathbb Z^r.
\)
Hence
\[
N_{\sigma_i}
=
\{
y\in\mathbb Z^r:
\langle n_i,y\rangle=0
\}.
\]
But this is precisely
\(
\ker(\pi_i\circ\Phi)\cap\mathbb Z^r.
\)
Thus, %  
the kernel of the $i$-th coordinate functional is 
$N_{\sigma_i}
$
means  is equality
\(
N_{\sigma_i}
=
\ker(\pi_i\circ\Phi)\cap\mathbb Z^r
\) holds.
Equivalently,
\(
N_{\sigma_i}
=
\{
y\in\mathbb Z^r:
(\pi_i\circ\Phi)(y)=0
\}.
\)
We now explain why the determinant gives the intersection multiplicity.
The map
\(
\Phi:\mathbb Z^r\to\mathbb Z^r
\)
has matrix
\(
N^T.
\)
Since the normals
\(
n_1,\dots,n_r
\)
are linearly independent,
the matrix
\(
N
\)
is invertible over
\(
\mathbb R.
\)
Therefore
\(
\Phi(\mathbb Z^r)
\)
is a full-rank sublattice of
\(
\mathbb Z^r.
\)
Its index equals
\(
[\mathbb Z^r:\Phi(\mathbb Z^r)].
\)
A standard theorem in lattice theory states that if an integer matrix
\(
A
\)
is invertible over
\(
\mathbb R,
\)
then
\[
[\mathbb Z^r:A(\mathbb Z^r)]
=
|\det(A)|.
\]
Applying this to
\(
A=N^T
\)
gives
\(
[\mathbb Z^r:\Phi(\mathbb Z^r)]
=
|\det(N^T)|
=
|\det(N)|.
\)
Since the stable intersection multiplicity is defined by this lattice
index, we obtain
\[
m(x)
=
|\det(N)|
=
\left|
\det(n_1,\dots,n_r)
\right|.
\]
%

 %%%%%%%%%%%%%%%%%%%%%%%%%%%%%%%%%%%%%%%%%%%%%%%%%%%%%%%%%%%%%%%%%%%%%%%%%%%%

%%%%%%%%%%%%%%%%%%%%%%%%%%%%%%%%%%%%%%%%%%%%%%%%%%%%%%%%%%%%%%%%%%%%%%%%%%%%

\section{A Tropical Bézout-type Theorem via Transverse Reduction}

In this section we establish a tropical Bézout-type theorem for tropical
varieties of arbitrary codimension defined by possibly redundant systems
of tropical equations. The main idea is that, although the variety may be defined by many
tropical hypersurfaces, only a number of hypersurfaces equal to the
codimension contribute locally to the transverse intersection structure. In other words, 
the main result shows that the tropical degree is controlled by mixed
volumes involving only the essential tropical hypersurfaces together
with complementary directions contributed by a tropical affine linear
space.
This formulation resolves the dimensional issue arising when
the number of relevant Newton polytopes is smaller than the ambient
dimension.
The proof combines stable tropical intersections,  Bernstein
theory, mixed subdivisions of Minkowski sums, and lattice-volume
interpretations of tropical intersection multiplicities.  

\medskip 

\begin{theorem} 
Let
$
p_i(x)
=
\max_{\alpha\in\mathcal M_i}
\bigl(
\langle x,\alpha\rangle+c_{i,\alpha}
\bigr),
\qquad i=1,\dots,k,
$
be tropical polynomials on
$
\mathbb R^n
$
with finite supports
$
\mathcal M_i\subset\mathbb Z^n,
$
and let
$
\Delta_i=\operatorname{conv}(\mathcal M_i)
\subset\mathbb R^n
$
be their Newton polytopes.
Assume that
\(
X
=
V(p_1)\cap\cdots\cap V(p_k)
\subset\mathbb R^n
\)
is a tropical variety of codimension
$
r.
$
Assume furthermore that for every point
$
x\in X
$
there exists a subset of indices
$
I(x)=\{i_1,\dots,i_r\}\subset\{1,\dots,k\}
$
such that locally near
$
x
$
the tropical hypersurfaces
$
V(p_{i_1}),\dots,V(p_{i_r})
$
intersect transversally and
\[
X
=
V(p_{i_1})
\operatorname{\cap_{stb}}
\cdots
\operatorname{\cap_{stb}}
V(p_{i_r})
\]
in a neighborhood of
$
x.
$
Let
$
L\subset\mathbb R^n
$
be a generic tropical affine linear space of dimension
$
r,
$
and let
$
\Sigma_L
$
denote the Newton polytope associated to
$
L.
$
Then the stable intersection
\(
X\operatorname{\cap_{stb}} L
\)
is finite and satisfies
\[
|X\operatorname{\cap_{stb}} L|
\le
\max_{1\le i_1<\cdots<i_r\le k}
\Bigl(
n!\,
\operatorname{MV}
(
\Delta_{i_1},
\dots,
\Delta_{i_r},
\Sigma_L,\dots,\Sigma_L
)
\Bigr),
\]
where
$
\Sigma_L
$
appears
$
n-r
$
times.
Consequently,
\[
\mathbb T\operatorname{deg}(X)
\le
\max_{1\le i_1<\cdots<i_r\le k}
\Bigl(
n!\,
\operatorname{MV}
(
\Delta_{i_1},
\dots,
\Delta_{i_r},
\Sigma_L,\dots,\Sigma_L
)
\Bigr).
\]
\end{theorem}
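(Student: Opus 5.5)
The plan is to reduce the count of $X\cap_{stb}L$ to tropical Bernstein counts for complete-intersection subsystems, using the local hypothesis on $I(x)$ together with the multiplicity formulas of Theorem~\ref{thm:mixed-cell-multiplicity}.

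\emph{Step 1: finiteness and position of the points.} Since $L$ is generic of dimension $r$ and $X$ has codimension $r$, genericity (a small generic translation of $L$) lets us assume that $L$ meets $X$ only in relative interiors of maximal cells of $X$, and only in relative interiors of facets of $L$. Then $X\cap_{stb}L$ is finite and coincides set-theoretically with $X\cap L$, as recalled in Section~2 for transverse intersections. In particular every $x\in X\cap_{stb}L$ is a smooth point of $X$, so Proposition~\ref{proposition:existence} and the hypothesis furnish an index set $I(x)$ with $X=V(p_{i_1})\cap_{stb}\cdots\cap_{stb}V(p_{i_r})$ near $x$.

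\emph{Step 2: local identification with a complete intersection.} Fix an $r$-subset $I=\{i_1,\dots,i_r\}$ and put $Y_I=V(p_{i_1})\cap_{stb}\cdots\cap_{stb}V(p_{i_r})$. For every $x$ with $I(x)=I$, the germ of $Y_I$ at $x$ equals the germ of $X$, which is a single smooth cell transverse to $L$. Hence $x$ is an isolated point of $Y_I\cap_{stb}L$, and its multiplicity there is $|\det(n_{i_1},\dots,n_{i_r},m_1,\dots,m_{n-r})|\ge 1$ by Theorem~\ref{thm:mixed-cell-multiplicity}. Summing over all isolated points of $Y_I\cap_{stb}L$ and applying the tropical Bernstein formula of Sections~3--4 (dual fully mixed cells in the mixed subdivision of $\Delta_{i_1}+\cdots+\Delta_{i_r}+\Sigma_L+\cdots+\Sigma_L$) then gives
\[
\#\{x\in X\cap_{stb}L:\ I(x)=I\}\le n!\,\operatorname{MV}(\Delta_{i_1},\dots,\Delta_{i_r},\Sigma_L,\dots,\Sigma_L).
\]

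\emph{Step 3: passing from the subsets to the maximum, which I expect to be the main obstacle.} Step 2 only bounds each class of points separately. Summing over the classes would give the bound with $\sum_I$, or with $\binom{k}{r}\max_I$, and not with $\max_I$ as stated. To obtain the maximum I would show that a single subset $I_0$, for instance one attaining the maximal mixed volume, already captures every point of $X\cap_{stb}L$. Each such $x$ lies in $V(p_i)$ for all $i$, hence in the set-theoretic intersection $\bigcap_{i\in I_0}V(p_i)$. It would then suffice to prove that this intersection has local dimension exactly $n-r$ at $x$, because for hypersurfaces the stable and set-theoretic intersections agree wherever the latter has the expected dimension. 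Then $x\in Y_{I_0}\cap_{stb}L$ with multiplicity at least $1$, and Step 2 applied to $I_0$ alone gives the claimed bound. The difficulty is that near $x$ the hypersurfaces indexed by $I_0$ need not be independent: their local normals may span fewer than $r$ directions, and the set-theoretic intersection can then have excess dimension. I would first try to exclude this using the genericity of $L$, by perturbing $L$ so that it avoids the lower-dimensional locus where $I_0$ degenerates along $X$. If that fails, I would settle for the summed bound and record the maximum form as requiring an extra hypothesis, namely that one subsystem is transverse along all of $X\cap L$.

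\emph{Step 4: the degree bound.} Finally, $\mathbb T\deg(X)$ is the supremum of $|X\cap_{stb}L|$ over generic $L$ of complementary dimension, and the right-hand side does not depend on the particular generic $L$, only on $\Sigma_L$. The bound on $\mathbb T\deg(X)$ therefore follows by taking the supremum in the inequality obtained in Step~3.
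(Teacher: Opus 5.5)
Your Steps 1, 2 and 4 follow the paper's proof: the dimension count, replacing $X$ near each point by the transverse subsystem $I(x)$, the Bernstein count of fully mixed cells for that subsystem, and the supremum over $L$.

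The genuine gap is Step 3. You leave it open, and the route you sketch for it does not work. The local counts only give $|X\cap_{stb}L|\le\sum_I n!\,\operatorname{MV}(\Delta_{i_1},\dots,\Delta_{i_r},\Sigma_L,\dots,\Sigma_L)$. To reach the maximum you want a single $I_0$ whose set-theoretic intersection is proper at every point of $X\cap L$, obtained by moving $L$ off a lower-dimensional degeneracy locus. But the locus of $X$ where a fixed $I_0$ degenerates can be a union of maximal cells of $X$, and an $r$-dimensional $L$ meets such a cell for an open set of positions.

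For example, in $\mathbb R^3$ let $V(p_i)=C_i\times\mathbb R$ with $C_1=\ell_a\cup\ell_b$, $C_2=\ell_a\cup\ell_c$, $C_3=\ell_b\cup\ell_c$, where $\ell_a=\{x=0\}$, $\ell_b=\{y=0\}$, $\ell_c=\{x+y=1\}$.
\begin{itemize}
\item $X$ is the union of the three vertical lines over $(0,0)$, $(0,1)$, $(1,0)$.
\item The hypothesis holds with $I=\{2,3\},\{1,3\},\{1,2\}$ respectively.
\item Each pair of cylinders shares a vertical plane containing two of these lines. So for \emph{every} pair and \emph{every} generic $L$, that pair's set-theoretic intersection is excess-dimensional at two of the three points of $X\cap L$.
\end{itemize}
The bound happens to hold here: there are $3$ points, and $3!\,\operatorname{MV}=3$ for each pair. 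It holds only because at each degenerate point one of the two cylinders is locally a union of two planes, so the point still lies in their stable intersection.

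Properness is therefore the wrong criterion. What you would actually need is a single $I_0$ with $X\cap L\subseteq V(p_{i_1})\cap_{stb}\cdots\cap_{stb}V(p_{i_r})$. This fails at any point near which two members of $I_0$ both coincide with one hyperplane $H$, since there the links agree and $H-H\neq\mathbb R^n$. The hypotheses allow every $r$-subset to have such points on $X\cap L$. In the example, add to $C_3$, $C_2$, $C_1$ a generic line crossing $\ell_a$, $\ell_b$, $\ell_c$ respectively; each pair then sees a new point of $X$ in the interior of a facet it shares.

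So no single subsystem need see all of $X\cap L$. What your argument proves is the summed bound, or the maximum under the extra assumption that $I(x)$ can be chosen independently of $x$. The maximum form requires a genuinely global idea that is missing.

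The paper does not supply it either. Its proof passes from the local subsystems to the maximum in one sentence (``the number of intersection points is bounded by the largest such mixed volume''), which is exactly the non sequitur you flagged.
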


\begin{proof}
Since
$
X
$
has codimension
$
r,
$
its dimension equals
\(
\dim(X)=n-r.
\)
Let
$
L\subset\mathbb R^n
$
be a generic tropical affine linear space of dimension
$
r.
$
The stable intersection dimension formula gives
\[
\dim(X\operatorname{\cap_{stb}} L)
=
\dim(X)+\dim(L)-n.
\]
Substituting
\(
\dim(X)=n-r
\)
and
\(
\dim(L)=r
\)
yields
\(
\dim(X\operatorname{\cap_{stb}} L)
=
(n-r)+r-n
=
0.
\)
Hence
$
X\operatorname{\cap_{stb}} L
$
is zero-dimensional.
Therefore it consists of finitely many isolated points.
Fix a point
\(
x\in X\operatorname{\cap_{stb}} L.
\)
By hypothesis,
there exists a subset
$
I(x)=\{i_1,\dots,i_r\}
$
such that locally near
$
x
$
the tropical variety
$
X
$
coincides with the transverse stable intersection
\(
V(p_{i_1})
\operatorname{\cap_{stb}}
\cdots
\operatorname{\cap_{stb}}
V(p_{i_r}).
\)
Therefore locally near
$
x
$
we have
\[
X\operatorname{\cap_{stb}} L
=
V(p_{i_1})
\operatorname{\cap_{stb}}
\cdots
\operatorname{\cap_{stb}}
V(p_{i_r})
\operatorname{\cap_{stb}}
L.
\]
Each tropical hypersurface
$
V(p_{i_j})
$
has codimension
$
1.
$
The tropical affine linear space
$
L
$
has dimension
$
r,
$
hence codimension
$
n-r.
$
Therefore the total codimension of
\(
V(p_{i_1}),
\dots,
V(p_{i_r}),
L
\)
equals
\(
1+\cdots+1+(n-r)
=
r+(n-r)
=
n.
\)
Hence the stable intersection is zero-dimensional.

The tropical hypersurface
$
V(p_{i_j})
$
is dual to the regular subdivision of the Newton polytope
$
\Delta_{i_j}.
$
Similarly,
the tropical affine linear space
$
L
$
is dual to a unimodular simplex
$
\Sigma_L
$
of dimension
$
n-r.
$
The stable intersection points correspond bijectively to fully mixed
cells in the mixed subdivision of the Minkowski sum
\[
\Delta_{i_1}
+\cdots+
\Delta_{i_r}
+
\Sigma_L+\cdots+\Sigma_L,
\]
where
$
\Sigma_L
$
appears
$
n-r
$
times.
A fully mixed cell has the form
\[
F_{i_1}
+\cdots+
F_{i_r}
+
G_1+\cdots+G_{n-r},
\]
where
$
F_{i_j}\subset\Delta_{i_j}
$
and
$
G_\ell\subset\Sigma_L
$
are cells satisfying
\(
\dim(F_{i_1})
+\cdots+
\dim(F_{i_r})
+
\dim(G_1)
+\cdots+
\dim(G_{n-r})
=
n.
\)
Since
$
\Sigma_L
$
is unimodular,
its lattice volume contribution equals
$
1.
$
Consequently,
the local stable intersection multiplicity at
$
x
$
is exactly the lattice volume of the corresponding fully mixed cell.
By the tropical Bernstein theorem,
the sum of all stable intersection multiplicities equals
\[
n!\,
\operatorname{MV}
(
\Delta_{i_1},
\dots,
\Delta_{i_r},
\Sigma_L,\dots,\Sigma_L
).
\]
Indeed,
for convex bodies
$
P_1,\dots,P_n\subset\mathbb R^n,
$
Minkowski's theorem states that
\(
\operatorname{Vol}
(
\lambda_1P_1+\cdots+\lambda_nP_n
)
\)
is a homogeneous polynomial of degree
$
n
$
in the variables
$
\lambda_1,\dots,\lambda_n.
$
The coefficient of
\(
\lambda_1\cdots\lambda_n
\)
is
\[
n!\,
\operatorname{MV}(P_1,\dots,P_n).
\]
Applying this to
\(
P_1=\Delta_{i_1},
\dots,
P_r=\Delta_{i_r},
\)
and
\(
P_{r+1}=\cdots=P_n=\Sigma_L
\)
gives
\[
n!\,
\operatorname{MV}
(
\Delta_{i_1},
\dots,
\Delta_{i_r},
\Sigma_L,\dots,\Sigma_L
).
\]
Since every point of
$
X\operatorname{\cap_{stb}} L
$
belongs locally to one of the transverse complete intersections
\[
V(p_{i_1})
\operatorname{\cap_{stb}}
\cdots
\operatorname{\cap_{stb}}
V(p_{i_r}),
\]
the number of intersection points is bounded by the largest such mixed
volume.
Therefore
\[
|X\operatorname{\cap_{stb}} L|
\le
\max_{1\le i_1<\cdots<i_r\le k}
\Bigl(
n!\,
\operatorname{MV}
(
\Delta_{i_1},
\dots,
\Delta_{i_r},
\Sigma_L,\dots,\Sigma_L
)
\Bigr).
\]
Taking the supremum over all generic tropical affine linear spaces
$
L
$
gives
\[
\mathbb T\operatorname{deg}(X)
\le
\max_{1\le i_1<\cdots<i_r\le k}
\Bigl(
n!\,
\operatorname{MV}
(
\Delta_{i_1},
\dots,
\Delta_{i_r},
\Sigma_L,\dots,\Sigma_L
)
\Bigr).
\]
%
%This proves the theorem.
\end{proof}

The geometric meaning of the theorem is that only
$
r
$
independent tropical hypersurfaces contribute to the local codimension
drop, while the remaining
$
n-r
$
directions are supplied by the tropical affine linear space used to test
the tropical degree.
Dually,
the corresponding mixed cells in the Newton subdivision acquire their
remaining dimensions from the unimodular simplex associated to the
tropical linear space.
 
%%%%%%%%%%%%%%%%%%%%%%%%%%%%%%%%%%%%%%%%%%%%%%%%%%%%%%%%%%%%%%%%%%%%%%%%

\section{Connections with Amoeba Contours and Real Tropical Geometry}

One of the most interesting geometric directions related to the present work concerns the relation between tropical degree theory and the geometry of amoebas and their contours. Recall that if
$
V(f)\subset (\mathbb{C}^\ast)^n
$
is an algebraic hypersurface defined by a Laurent polynomial
$
f(z)=\sum_{\alpha\in A} c_\alpha z^\alpha,
$
its amoeba is the image
$
\mathcal{A}(f)=\operatorname{Log}(V(f))
$
under the logarithmic map coordinate-wise. 
The contour of the amoeba is the critical value set of the logarithmic map restricted to the hypersurface. Equivalently, the contour records points where the tangent space of the hypersurface becomes non-transversal to the fibers of the logarithmic map. Geometrically, contours describe the regions where the amoeba changes local convexity behavior and where different asymptotic branches interact.
Tropical hypersurfaces arise naturally as piecewise-linear limits of amoebas under non-Archimedean degeneration. In this limit process, the spine of the amoeba converges to the tropical hypersurface dual to the Newton subdivision. Consequently, many asymptotic geometric properties of amoebas are encoded combinatorially by tropical geometry. The contour of the amoeba is therefore expected to admit a tropical shadow governed by tropical tangent directions, tropical Gauss maps, and stable tropical intersections.

The connection with the present work appears through tropical degree bounds and stable tropical multiplicities. The contour of an amoeba is closely related to logarithmic Gauss maps and to the geometry of tangent hyperplanes. In tropical geometry, tangent directions are replaced by primitive normal vectors and tangent lattices associated with tropical facets. The determinant formulas obtained in the present paper suggest that tropical multiplicities may control the asymptotic complexity of amoeba contours under tropical degeneration.
More precisely, the local determinant formulas
$
m(x)=|\det(n_1,\dots,n_r,m_1,\dots,m_{n-r})|
$
 describe how transverse tropical directions interact combinatorially. Since amoeba contours are governed by degeneracies of logarithmic tangent directions, these determinant quantities can be interpreted as tropical analogues of local Jacobian-type contributions associated with contour geometry. In particular, mixed cells in tropical subdivisions may correspond to asymptotic contour contributions in the amoeba picture.

Another important connection concerns the work of Lang, Shapiro, and Shustin on intersections of amoeba contours with lines. Their results show that the number of contour intersections admits strong bounds related to Newton polygon geometry and logarithmic Gauss maps. The tropical degree estimates established in the present paper suggest that similar combinatorial control phenomena may hold in tropical settings. In particular, one may ask whether the tropical degree controls the number of intersections between tropical contour-type loci and tropical affine linear spaces.
The tropical framework also suggests the existence of tropical contour cycles defined through degeneracies of tropical normal directions. Such objects could potentially be described using stable tropical intersections and balanced polyhedral structures. Their multiplicities would naturally be expected to involve lattice indices and mixed-volume contributions analogous to those studied in this paper.

The relation with real tropical geometry is equally natural. Real tropical varieties often carry additional combinatorial structures related to signs, orientations, and patchworking constructions. In the real setting, contour phenomena become closely connected with inflection behavior, logarithmic curvature, and the topology of real amoebas. Since tropical multiplicities encode local combinatorial complexity, the determinant formulas obtained here may provide combinatorial invariants controlling the geometry of real tropical contours and real logarithmic Gauss maps.
It would also be interesting to investigate relations between tropical degree theory and amoeba geometry, especially in connection with contours and real tropical geometry. The interaction between tropical multiplicities, mixed volumes, and non-Archimedean analytic geometry may lead to further applications in mirror symmetry and tropical enumerative geometry.

Several natural directions remain open. One important problem is the search for sharper tropical degree bounds involving finer Newton polytope invariants beyond combinatorial diameter estimates. Another direction is the study of singular tropical cycles and non-transverse stable intersections, where determinant formulas may require correction terms reflecting local tropical singularities.

Another promising direction concerns tropical discriminants and tropical dual varieties. Amoeba contours are closely related to classical dual varieties through logarithmic Gauss maps. Tropical analogues of these objects may therefore interact strongly with tropical stable intersection theory and mixed subdivisions. The determinant and lattice-volume interpretations developed in the present work could provide an effective framework for studying tropical dual degeneracies and tropical discriminantal loci.

Overall, the geometric philosophy is that tropical degree theory should not only control transversal tropical intersections, but should also govern asymptotic tangency phenomena arising in amoeba geometry, logarithmic Gauss maps, contour loci, and tropical duality theory.

%%%%%%%%%%%%%%%%%%%%%%%%%%%%%%%%%%%%%%%%%%%%%%%%%%%%%%%%%%%%%%%%%%%%%%%%%%% 

\section*{Appendix A: Lattices Associated to Rational Polyhedra}

In this section we study the lattice structure naturally associated to a
rational polyhedron.
Given a rational polyhedron
$
\sigma\subset N_\mathbb R,
$
we consider the linear space generated by its translation directions and
prove that the intersection of this space with the ambient lattice
defines a full-rank lattice.
This result provides the fundamental lattice-theoretic framework used in
tropical geometry and tropical intersection theory, where tangent
lattices play a central role in the definition of balancing conditions
and stable intersection multiplicities.

We now give a rigorous proof of the statement that if
\(
\sigma\subset N_\mathbb R
\)
is a rational polyhedron, then
\(
N_\sigma
=
L_\sigma\cap N
\)
is a full-rank lattice in
\(
L_\sigma,
\)
where
\(
L_\sigma
=
\operatorname{span}(\sigma-\sigma).
\)
Let
\(
N\simeq\mathbb Z^n
\)
be a lattice and let
\(
N_\mathbb R=N\otimes_\mathbb Z\mathbb R\simeq\mathbb R^n.
\)
Let
\(
\sigma\subset N_\mathbb R
\)
be a rational polyhedron.
By definition, this means that
\[
\sigma
=
\left\{
x\in N_\mathbb R:
\langle m_i,x\rangle\ge a_i,
\quad i=1,\dots,s
\right\},
\]
where
\(
m_i\in N^\vee=\operatorname{Hom}(N,\mathbb Z)
\)
and
\(
a_i\in\mathbb R.
\)
Equivalently,
every face of
\(
\sigma
\)
is generated by vectors with rational coordinates.
Fix a point
\(
x_0\in\sigma.
\)
Define
\[
L_\sigma
=
\operatorname{span}(\sigma-\sigma)
=
\operatorname{span}
\{
x-y:
x,y\in\sigma
\}.
\]
This is the smallest linear subspace containing all translations of
\(
\sigma.
\)
We first prove that
\(
L_\sigma
\)
is a rational linear subspace of
\(
N_\mathbb R.
\)
Since
\(
\sigma
\)
is rational,
there exist vectors
\(
v_1,\dots,v_d\in N_\mathbb Q
=
N\otimes_\mathbb Z\mathbb Q
\)
such that
\[
L_\sigma
=
\operatorname{span}_\mathbb R
\{
v_1,\dots,v_d
\}.
\]
Indeed, every edge direction of a rational polyhedron has rational
coordinates, and the linear span of these edge directions generates
\(
L_\sigma.
\)
Since each
\(
v_i
\)
has rational coordinates,
there exists a positive integer
\(
k_i
\)
such that
\(
k_i v_i\in N.
\)
Define
\(
u_i=k_i v_i\in N.
\)
Then
\(
v_i=\frac1{k_i}u_i.
\)
Therefore
\[
L_\sigma
=
\operatorname{span}_\mathbb R
\{
u_1,\dots,u_d
\},
\]
where
\(
u_i\in N.
\)
Hence
\(
L_\sigma
\)
is generated over
\(
\mathbb R
\)
by lattice vectors.

We now study
\(
N_\sigma
=
L_\sigma\cap N.
\)
We first show that
\(
N_\sigma
\)
is a subgroup of
\(
N.
\)
If
\(
u,v\in N_\sigma,
\)
then
\(
u,v\in N
\)
and
\(
u,v\in L_\sigma.
\)
Since
\(
L_\sigma
\)
is a vector subspace,
\(
u-v\in L_\sigma.
\)
Since
\(
N
\)
is a lattice,
\(
u-v\in N.
\)
Therefore
\(
u-v\in L_\sigma\cap N=N_\sigma.
\)
Hence
\(
N_\sigma
\)
is a subgroup of
\(
N.
\)

We next show that
\(
N_\sigma
\)
spans
\(
L_\sigma
\)
over
\(
\mathbb R.
\)
Since
\(
u_1,\dots,u_d\in N
\)
and
\(
u_1,\dots,u_d\in L_\sigma,
\)
we have
\(
u_i\in N_\sigma.
\)
But
\(
L_\sigma
=
\operatorname{span}_\mathbb R
\{
u_1,\dots,u_d
\}.
\)
Hence
\(
L_\sigma
=
\operatorname{span}_\mathbb R(N_\sigma).
\)
Thus
\(
N_\sigma
\)
has rank equal to
\(
\dim(L_\sigma).
\)

We now prove that
\(
N_\sigma
\)
is discrete in
\(
L_\sigma.
\)
Since
\(
N\simeq\mathbb Z^n
\)
is discrete in
\(
N_\mathbb R,
\)
every subset of
\(
N
\)
is also discrete.
In particular,
\(
N_\sigma=L_\sigma\cap N
\)
is discrete in
\(
L_\sigma.
\)
Thus
\(
N_\sigma
\)
is a discrete subgroup of
\(
L_\sigma
\)
spanning
\(
L_\sigma
\)
over
\(
\mathbb R.
\)
By definition, this means that
\(
N_\sigma
\)
is a lattice in
\(
L_\sigma.
\)
Since its rank equals
\(
\dim(L_\sigma),
\)
it is a full-rank lattice.
Equivalently,
there exists a basis
\(
e_1,\dots,e_d
\)
of
\(
L_\sigma
\)
such that
\(
N_\sigma
=
\mathbb Z e_1+\cdots+\mathbb Z e_d.
\)
Therefore
\(
N_\sigma=L_\sigma\cap N
\)
is a full-rank lattice inside
\(
L_\sigma.
\)
The same proof applies to any rational polyhedron
\(
\tau,
\)
showing that
\(
N_\tau=L_\tau\cap N
\)
is a full-rank lattice inside
\(
L_\tau.
\)
%
% 
  
%%%%%%%%%%%%%%%%%%%%%%%%%%%%%%%%%%%%%%%%%%%%%%%%%%%%%%%%%%%%%%%%%%%%%%%%%%
 
% 

\section*{Appendix B: Lattice Indices and Tropical Intersection Multiplicities}

In this section we develop the lattice-theoretic framework underlying
stable tropical intersections.
Given two transverse rational subspaces arising from tangent spaces of
tropical cells, we study the associated tangent lattices and prove that
their sum has finite index inside the ambient lattice.
We then establish the determinant formula expressing this index as the
absolute value of a suitable integer determinant.
This provides the algebraic foundation for tropical intersection
multiplicities and explains how local multiplicities are computed from
primitive normal vectors and lattice covolumes.

\begin{theorem}\label{thm:index-determinant}
Let $N\simeq\mathbb Z^n$ be a lattice and let
$N_\mathbb R=N\otimes_\mathbb Z\mathbb R$.
Let $L_\sigma,L_\tau\subset N_\mathbb R$ be rational linear subspaces
satisfying
$
L_\sigma+L_\tau=N_\mathbb R.
$
Define the lattices
$
N_\sigma=L_\sigma\cap N
$
and
$
N_\tau=L_\tau\cap N.
$
Then
$
N_\sigma+N_\tau
$
has finite index in $N$.
Moreover, if
$
u_1,\dots,u_a
$
is a $\mathbb Z$-basis of $N_\sigma$ and
$
v_1,\dots,v_b
$
is a $\mathbb Z$-basis of $N_\tau$, where
$
a+b=n,
$
then
$$
[N:N_\sigma+N_\tau]
=
|\det(M)|,
$$
where
$
M=(u_1,\dots,u_a,v_1,\dots,v_b)
$
is the $n\times n$ integer matrix whose columns are the basis vectors.
\end{theorem}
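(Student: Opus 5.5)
The plan has two parts. First, show that $N_\sigma+N_\tau$ has finite index. Second, in the case $a+b=n$, identify $N_\sigma+N_\tau$ with the lattice $M(\mathbb Z^n)$ spanned by the columns of $M$, and compute its index by the Smith normal form.

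For finiteness, I will use Appendix A. There it is shown that $N_\sigma=L_\sigma\cap N$ is a full-rank lattice in $L_\sigma$, so $\operatorname{span}_\mathbb R(N_\sigma)=L_\sigma$, and likewise $\operatorname{span}_\mathbb R(N_\tau)=L_\tau$. Hence $\operatorname{span}_\mathbb R(N_\sigma+N_\tau)=L_\sigma+L_\tau=N_\mathbb R$. So $N_\sigma+N_\tau$ is a subgroup of $N\simeq\mathbb Z^n$ containing $n$ vectors that are linearly independent over $\mathbb R$. Let $\Lambda\subseteq N_\sigma+N_\tau$ be the sublattice these $n$ vectors generate. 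Then $N/\Lambda$ is finite: every class has a representative in the half-open fundamental parallelepiped of $\Lambda$, and that bounded region contains only finitely many lattice points. Since $N_\sigma+N_\tau$ contains $\Lambda$, it also has finite index in $N$.

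For the determinant formula, I first note that $a+b=n$ forces the sum $L_\sigma+L_\tau$ to be direct. Indeed, $\dim L_\sigma=a$ and $\dim L_\tau=b$ by the rank statement of Appendix A, and
\[
\dim(L_\sigma\cap L_\tau)=a+b-\dim(L_\sigma+L_\tau)=n-n=0 .
\]
Therefore $u_1,\dots,u_a,v_1,\dots,v_b$ are $\mathbb R$-linearly independent, so $M$ is invertible over $\mathbb R$ and $\det M\neq 0$. Every element of $N_\sigma+N_\tau$ is an integer combination of the $u_i$ and the $v_j$, so $N_\sigma+N_\tau=M(\mathbb Z^n)$, computed in coordinates for a fixed $\mathbb Z$-basis of $N$. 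A change of basis of $N$ multiplies $M$ by a unimodular matrix, so $|\det M|$ is well defined.

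It remains to prove $[\mathbb Z^n:M(\mathbb Z^n)]=|\det M|$ for a nonsingular integer matrix $M$. I would write the Smith normal form $M=PDQ$ with $P,Q\in GL_n(\mathbb Z)$ and $D=\operatorname{diag}(d_1,\dots,d_n)$, all $d_i\neq 0$. Since $Q(\mathbb Z^n)=\mathbb Z^n$ and $P$ is an automorphism of $\mathbb Z^n$, we get
\[
\mathbb Z^n/M(\mathbb Z^n)\cong \mathbb Z^n/D(\mathbb Z^n)\cong\bigoplus_i \mathbb Z/d_i\mathbb Z ,
\]
which has order $\prod_i|d_i|=|\det D|=|\det M|$, because $\det P,\det Q=\pm1$.

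The only step that needs genuine input is the existence of the Smith normal form; I will cite it as standard. If a self-contained argument is preferred, an alternative is a volume count: the fundamental parallelepiped of $M(\mathbb Z^n)$ has volume $|\det M|$, and each class of $\mathbb Z^n/M(\mathbb Z^n)$ contributes a unit-volume translate of $[0,1)^n$ to a tiling. The rest of the argument is the dimension count showing the sum is direct, together with Appendix A.
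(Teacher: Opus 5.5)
Your proof is correct, but it takes a different route from the paper in two places. For finiteness, the paper does not argue in the stated generality. It imposes $L_\sigma\cap L_\tau=\{0\}$ ``in the transverse case'', builds $\Lambda=M(\mathbb Z^n)$ from the two bases, and concludes finite index because a full-rank subgroup of $\mathbb Z^n$ has finite quotient. You instead observe that $\operatorname{span}_{\mathbb R}(N_\sigma+N_\tau)=L_\sigma+L_\tau=N_{\mathbb R}$ and bound the quotient by the lattice points in a fundamental parallelepiped. This proves the first assertion of the theorem without any directness assumption, which the theorem as stated requires. You then derive directness from $a+b=n$ by the dimension count (using $a=\dim L_\sigma$, $b=\dim L_\tau$ from Appendix A), whereas the paper simply assumes it.

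For the identity $[\mathbb Z^n:M(\mathbb Z^n)]=|\det M|$, the paper uses a covolume argument. It notes that $M([0,1)^n)$ is a fundamental domain for $\Lambda$ of volume $|\det M|$, and that the index equals the ratio of covolumes, which it asserts without proof. Your Smith normal form argument is purely algebraic and avoids the tiling and covolume-ratio step. It also identifies the quotient $N/(N_\sigma+N_\tau)\cong\bigoplus_i\mathbb Z/d_i\mathbb Z$ rather than just its order. The paper's version, in exchange, shows the multiplicity directly as the volume of the parallelotope spanned by the tangent-lattice bases. That is the viewpoint the paper uses for the lattice volumes of mixed cells elsewhere.
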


\begin{proof}
Since
$
L_\sigma+L_\tau=N_\mathbb R,
$
the dimension formula for vector spaces gives
$
\dim(L_\sigma+L_\tau)
=
\dim(L_\sigma)+\dim(L_\tau)-\dim(L_\sigma\cap L_\tau).
$
Since
$
L_\sigma+L_\tau=N_\mathbb R,
$
we obtain
$
n
=
\dim(L_\sigma)+\dim(L_\tau)-\dim(L_\sigma\cap L_\tau).
$
In the transverse case arising in tropical intersection theory,
$
L_\sigma\cap L_\tau=\{0\},
$
hence
$
\dim(L_\sigma)+\dim(L_\tau)=n.
$
Write
$
a=\dim(L_\sigma)
$
and
$
b=\dim(L_\tau).
$
Since
$
N_\sigma=L_\sigma\cap N
$
is a full-rank lattice inside $L_\sigma$, there exists a basis
$
u_1,\dots,u_a
$
of $L_\sigma$ such that
$
N_\sigma
=
\mathbb Zu_1+\cdots+\mathbb Zu_a.
$
Similarly,
there exists a basis
$
v_1,\dots,v_b
$
of $L_\tau$ such that
$
N_\tau
=
\mathbb Zv_1+\cdots+\mathbb Zv_b.
$
Since
$
L_\sigma+L_\tau=N_\mathbb R
$
and
$
a+b=n,
$
the vectors
$
u_1,\dots,u_a,v_1,\dots,v_b
$
span $N_\mathbb R$.

We now prove that these vectors are linearly independent.
Suppose
$
\lambda_1u_1+\cdots+\lambda_au_a
+
\mu_1v_1+\cdots+\mu_bv_b
=
0.
$
Then
$
\lambda_1u_1+\cdots+\lambda_au_a
=
-
(\mu_1v_1+\cdots+\mu_bv_b).
$
The left-hand side belongs to $L_\sigma$ while the right-hand side
belongs to $L_\tau$.
Hence
$
\lambda_1u_1+\cdots+\lambda_au_a
\in
L_\sigma\cap L_\tau.
$
Since
$
L_\sigma\cap L_\tau=\{0\},
$
we obtain
$
\lambda_1u_1+\cdots+\lambda_au_a=0.
$
As
$
u_1,\dots,u_a
$
form a basis of $L_\sigma$, then all coefficients
$
\lambda_i
$
vanish.
Similarly,
all coefficients
$
\mu_j
$
vanish.
Therefore
$
u_1,\dots,u_a,v_1,\dots,v_b
$
form a basis of $N_\mathbb R$.
Define
$
M=(u_1,\dots,u_a,v_1,\dots,v_b).
$
Since the columns form a basis of $N_\mathbb R$, we have
$
\det(M)\neq0.
$

Now define
$
\Lambda=N_\sigma+N_\tau.
$
Since
$
N_\sigma
=
\mathbb Zu_1+\cdots+\mathbb Zu_a
$
and
$
N_\tau
=
\mathbb Zv_1+\cdots+\mathbb Zv_b,
$
we obtain
$$
\Lambda
=
\mathbb Zu_1+\cdots+\mathbb Zu_a
+
\mathbb Zv_1+\cdots+\mathbb Zv_b.
$$
Equivalently,
$
\Lambda=M(\mathbb Z^n).
$
Indeed,
if
$
z=(m_1,\dots,m_a,n_1,\dots,n_b)\in\mathbb Z^n,
$
then
$$
Mz
=
m_1u_1+\cdots+m_au_a
+
n_1v_1+\cdots+n_bv_b,
$$
which belongs to $\Lambda$.
Conversely,
every element of $\Lambda$ has this form.

We now prove that $\Lambda$ has finite index in $N$.
Since
$
M
$
is invertible over $\mathbb R$, the subgroup
$
M(\mathbb Z^n)
$
has rank $n$.
Hence
$
\Lambda
$
is a full-rank subgroup of
$
N\simeq\mathbb Z^n.
$
Every full-rank subgroup of $\mathbb Z^n$ has finite index.
Indeed,
the quotient group
$
N/\Lambda
$
is finitely generated and has rank
$
n-n=0,
$
hence it is finite.
Therefore
$
[N:\Lambda]<\infty.
$

We now derive the determinant formula.
Consider the linear map
$
T:\mathbb R^n\to\mathbb R^n
$
defined by
$
T(x)=Mx.
$
The image of $\mathbb Z^n$ under $T$ is precisely $\Lambda$.
The standard fundamental domain for $\mathbb Z^n$ is the half-open unit
cube
$
Q=[0,1)^n.
$
Its Euclidean volume equals $1$.
The image
$
M(Q)
$
is the parallelepiped generated by the column vectors of $M$.
Its Euclidean volume equals
$
|\det(M)|.
$
The translates of
$
M(Q)
$
by elements of $\Lambda$
tile $\mathbb R^n$ without overlap except on boundaries.
Hence
$
M(Q)
$
is a fundamental domain for the lattice $\Lambda$.
The covolume of a full-rank lattice equals the volume of any
fundamental domain.
Therefore the covolume of $\Lambda$ equals
$
|\det(M)|.
$
On the other hand,
since $\Lambda\subset N\simeq\mathbb Z^n$, the index
$
[N:\Lambda]
$
equals the ratio of covolumes:
$
[N:\Lambda]
=
\frac{\operatorname{covol}(\Lambda)}
{\operatorname{covol}(N)}.
$
Since the standard lattice $\mathbb Z^n$ has covolume $1$, we obtain
$
[N:\Lambda]
=
|\det(M)|.
$
Therefore
$$
[N:N_\sigma+N_\tau]
=
|\det(M)|.
$$
\end{proof}

The determinant therefore measures the covolume of the lattice generated
by the tangent lattices
$
N_\sigma
$
and
$
N_\tau
$
inside the ambient lattice
$
N.
$
In tropical intersection theory,
this determinant is precisely the local stable intersection
multiplicity.

%%%%%%%%%%%%%%%%%%%%%%%%%%%%%%%%%%%%%%%%%%%%%%%%%%%%%%%%%%%%%%%%%%%%%%%%%%%%
%\newpage

%%%%%%%%%%%%%%%%%%%%%%%%%%%%%%%%%%%%%%%%%%%%%%%%%%%%%%%%%%%%%%%%%%%%%%%%%%%%
  
\section*{Appendix C: Redundant Tropical Equations and Essential Mixed Volumes}

In this section we analyze tropical varieties defined by more equations
than their codimension.
We show that, although a tropical variety may be presented as the stable
intersection of many tropical hypersurfaces, only as many hypersurfaces
as the codimension contribute to the local transverse structure.
The remaining equations are locally redundant and do not contribute new
independent normal directions.
This leads to a local determinant formula for stable intersection
multiplicities and a mixed-volume interpretation involving only the
essential Newton polytopes together with complementary tropical linear
directions.

\begin{theorem}%[Local reduction to essential tropical hypersurfaces]
Let
\(
X
=
V(p_1)\operatorname{\cap_{stb}}\cdots\operatorname{\cap_{stb}} V(p_k)
\subset\mathbb R^n
\)
be a tropical variety of codimension
\(
r,
\)
where
\(
r<k.
\)
Assume that
\(
x\in X
\)
is a smooth point.
Then there exists a subset of indices
\(
I(x)=\{i_1,\dots,i_r\}\subset\{1,\dots,k\}
\)
such that locally near
\(
x
\)
one has
\(
X
=
V(p_{i_1})
\operatorname{\cap_{stb}}
\cdots
\operatorname{\cap_{stb}}
V(p_{i_r}).
\)
Let
\(
n_{i_1},\dots,n_{i_r}\in\mathbb Z^n
\)
be the primitive normal vectors of the corresponding tropical facets.
Let
\(
L\subset\mathbb R^n
\)
be a generic tropical affine linear space of codimension
\(
n-r,
\)
and let
\(
m_1,\dots,m_{n-r}\in\mathbb Z^n
\)
be primitive normal vectors defining
\(
L.
\)
Then the local stable intersection multiplicity at an isolated point
\(
x\in X\operatorname{\cap_{stb}} L
\)
is
\[
m(x)
=
\left|
\det
(
n_{i_1},\dots,n_{i_r},
m_1,\dots,m_{n-r}
)
\right|.
\]
Equivalently,
\[
m(x)
=
[
\mathbb Z^n:
N_{\sigma_{i_1}}
+\cdots+
N_{\sigma_{i_r}}
+
N_L
].
\]
Dually,
the isolated stable intersection points correspond to fully mixed cells
in the mixed subdivision of
\[
\Delta_{i_1}
+\cdots+
\Delta_{i_r}
+
\Sigma+\cdots+\Sigma,
\]
where
\(
\Sigma
\)
is the unimodular simplex associated to
\(
L
\)
and appears
\(
n-r
\)
times.
Consequently,
\[
\sum_x m(x)
=
n!\,
\operatorname{MV}
(
\Delta_{i_1},
\dots,
\Delta_{i_r},
\Sigma,\dots,\Sigma
).
\]
\end{theorem}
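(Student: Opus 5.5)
The plan is to reduce everything to the transverse complete-intersection case already treated in Theorem~\ref{thm:mixed-cell-multiplicity}, in three steps.

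\emph{Step 1: choose the local subsystem.} Since $x$ is a smooth point, Proposition~\ref{proposition:existence} gives indices $i_1,\dots,i_r$ whose local primitive normals $n_{i_1},\dots,n_{i_r}$ are linearly independent and span $L_X^\perp$. To show that $X$ agrees near $x$ with $V(p_{i_1})\cap_{stb}\cdots\cap_{stb}V(p_{i_r})$, I would shrink to a neighborhood $U$ of $x$ in which:
\begin{itemize}
\item $X\cap U$ is an open piece of the affine space $x+L_X$;
\item each $V(p_{i_j})\cap U$ is an open piece of the hyperplane $\{\langle n_{i_j},y-x\rangle=0\}$.
\end{itemize}
In $U$ the $r$ hyperplanes meet transversally, and their intersection is exactly $x+L_X$, because the normals span $L_X^\perp$. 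In the transverse case, stable intersection coincides with set-theoretic intersection with weight one, so the two local cycles agree on $U$. Here I would assume all weights equal $1$, as in the smooth setting.

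\emph{Step 2: compute the local multiplicity.} Near an isolated point $x\in X\cap_{stb}L$, the stable intersection $X\cap_{stb}L$ coincides with the transverse complete intersection $V(p_{i_1})\cap_{stb}\cdots\cap_{stb}V(p_{i_r})\cap_{stb}L$, since stable intersection is local by the Jensen--Yu link definition. Theorem~\ref{thm:mixed-cell-multiplicity} then applies verbatim and gives the determinant formula $m(x)=|\det(n_{i_1},\dots,n_{i_r},m_1,\dots,m_{n-r})|$. It also gives the identification with $\operatorname{Vol}_{\mathbb Z}$ of the parallelotope $F_{i_1}+\cdots+F_{i_r}+G_1+\cdots+G_{n-r}$. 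The index form follows from the same homomorphism $\Phi$ used there, with $N_L$ the tangent lattice of the cell of $L$ through $x$, together with Theorem~\ref{thm:index-determinant}.

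I expect Step 2 to be the main obstacle. The index of a sum of tangent lattices is not literally the cokernel of $\Phi$; the clean statement is that the relevant index is a quotient computed from the dual (normal) lattices. I would therefore verify the identification through the Jensen--Yu definition: the multiplicity of the $0$-dimensional cell is the index $[N:N_X+N_L]$, where $N_X$ and $N_L$ are complementary-rank tangent lattices, and this index equals the index of the lattice generated by the normals, up to saturation. Where saturation fails, I would record the discrepancy rather than hide it.

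\emph{Step 3: the global sum.} For the Bernstein identity I would apply the mixed-subdivision dictionary of Section~3 to the system $p_{i_1},\dots,p_{i_r}$ and $L$. Fully mixed cells of $\Delta_{i_1}+\cdots+\Delta_{i_r}+\Sigma+\cdots+\Sigma$ correspond to points of $V(p_{i_1})\cap_{stb}\cdots\cap_{stb}V(p_{i_r})\cap_{stb}L$, each counted with its multiplicity from Step 2. Summing and using Minkowski's polynomial identity, as in Section~4, gives $n!\,\operatorname{MV}$.

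This last equality requires that the same subsystem $I$ serve for all points of $X\cap_{stb}L$, so that the $r$ essential hypersurfaces cut out $X$ along the whole of $X\cap_{stb}L$. I would make this explicit, either by taking $I(x)$ constant as a hypothesis or by reading the sum as running over the points of $V(p_{i_1})\cap_{stb}\cdots\cap_{stb}V(p_{i_r})\cap_{stb}L$. Without that, only the local upper bound of Section~6 survives.
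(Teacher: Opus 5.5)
Your outline follows the paper's own route: independent local normals, reduction to the transverse determinant formula, then fully mixed cells and Minkowski's polynomial. Your Step~3 caveat is well taken, since the paper's proof never addresses that $I(x)$ varies with $x$. Even a constant $I$ is not enough unless $V(p_{i_1})\cap_{stb}\cdots\cap_{stb}V(p_{i_r})$ equals $X$ globally; otherwise the mixed volume also counts points off $X$, so your second reading is the one that yields an equality.

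\textbf{The gap.} The genuine gap is the claim in Step~1 that a transverse stable intersection carries weight one. Iterate the Jensen--Yu index $[N:N_\sigma+N_\tau]$ and set $\Lambda=\mathbb Z n_{i_1}+\cdots+\mathbb Z n_{i_r}$ and $A=(\Lambda\otimes\mathbb R)\cap N^\vee$. Then the local cycle $V(p_{i_1})\cap_{stb}\cdots\cap_{stb}V(p_{i_r})$ along $x+L_X$ has weight $\omega_X=[A:\Lambda]$, even when every hypersurface facet has weight one. For example, $p_1=\max(0,x_1+x_2)$ and $p_2=\max(x_1,x_2)$ on $\mathbb R^3$ meet transversally along the $x_3$-axis with weight $[\mathbb Z^3:N_{\sigma_1}+N_{\sigma_2}]=2$.

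Having discarded $\omega_X$, Step~2 sets $m(x)=[N:N_X+N_L]$ and then anticipates a ``saturation discrepancy'' with the determinant. That discrepancy is exactly $\omega_X$. The correct computation is $m(x)=\omega_X\,\omega_L\,[N:N_X+N_L]$. Combine it with the duality $[N:N_X+N_L]=[N^\vee:A+B]$, where $B=N_L^\perp\cap N^\vee$ (the identity you were reaching for), and with
\[
\bigl|\det(n_{i_1},\dots,n_{i_r},m_1,\dots,m_{n-r})\bigr|=[N^\vee:A\oplus B]\,[A:\Lambda]\,[B:\Lambda'],\qquad \Lambda'=\mathbb Z m_1+\cdots+\mathbb Z m_{n-r}.
\]
Here $\omega_L=1=[B:\Lambda']$ for a tropical linear space whose normals $m_\ell$ form a basis of $B$.

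In the example, take $L=V(\max(0,x_1-1,x_2-1,x_3))$. Its cell through $x=0$ is $\{x_3=0\}$, and it meets the $x_3$-axis only at $0$. Your convention gives $m(0)=1$, whereas $|\det|=2=3!\,\operatorname{MV}(\Delta_1,\Delta_2,\Sigma)$. So weight one would even contradict the Bernstein identity you want in Step~3.

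\textbf{Consequences for Step~2.} The same issue undermines both citations in Step~2.
\begin{itemize}
\item The ``equivalently'' index form cannot come from $\Phi$ and Theorem~\ref{thm:index-determinant}. That theorem requires $L_\sigma\cap L_\tau=0$, so it does not apply to hyperplane lattices. In fact the identity is false: in the example $N_{\sigma_1}+N_{\sigma_2}+N_L=\mathbb Z^3$ has index $1\neq 2$. The correct lattice form is $\omega_X\,[N:N_X+N_L]$ with $N_X=N_{\sigma_{i_1}}\cap\cdots\cap N_{\sigma_{i_r}}$.
\item Citing Theorem~\ref{thm:mixed-cell-multiplicity} verbatim imports the same false step $[\mathbb Z^n:\sum_i N_{\sigma_i}+\sum_j N_{\tau_j}]=[\mathbb Z^n:\Phi(\mathbb Z^n)]$. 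The paper's proof of the present statement also simply asserts this. Its determinant formula is true for the genuine stable multiplicity, but only via the computation above.
\end{itemize}

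\textbf{Dependence on the subsystem.} $\omega_X$ depends on which subsystem you choose. Adding $p_3=\max(0,x_1)$ puts the example in the setting $r=2<k=3$. Then $I=\{1,3\}$ gives weight and determinant $1$, while $I=\{1,2\}$ gives $2$. So ``$X=V(p_{i_1})\cap_{stb}\cdots\cap_{stb}V(p_{i_r})$ near $x$'' can only mean that $X$ is \emph{endowed} with the cycle structure of the chosen subsystem, and $m(x)$ depends on that choice. This reading is forced anyway: a nonempty stable intersection of $k$ hypersurfaces has codimension exactly $k$, so $r<k$ makes sense only for the set-theoretic intersection.

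Finally, like the paper, you tacitly assume $x$ lies in the relative interior of a facet of each $V(p_{i_j})$. Smoothness of $X$ at $x$ does not guarantee this.
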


\begin{proof}
Since every tropical hypersurface has codimension
\(
1,
\)
a transverse intersection of
\(
k
\)
independent tropical hypersurfaces in
\(
\mathbb R^n
\)
would have codimension
\(
k.
\)
However,
by hypothesis,
\(
\operatorname{codim}(X)=r<k.
\)
Therefore the defining tropical hypersurfaces cannot all be independent
from the tropical intersection-theoretic point of view.
Let
\(
x\in X
\)
be a smooth point.
Near
\(
x,
\)
each tropical hypersurface
\(
V(p_i)
\)
is locally supported on a single tropical facet.
Let
\(
n_i\in\mathbb Z^n
\)
denote the primitive normal vector of this facet.
The tangent space of
\(
V(p_i)
\)
at
\(
x
\)
is the hyperplane
\[
T_xV(p_i)
=
\{
u\in\mathbb R^n:
\langle n_i,u\rangle=0
\}.
\]
Since
\(
X
=
V(p_1)\operatorname{\cap_{stb}}\cdots\operatorname{\cap_{stb}} V(p_k)
\)
has codimension
\(
r,
\)
the intersection of these tangent hyperplanes has codimension
\(
r.
\)
Equivalently,
the vectors
\(
n_1,\dots,n_k
\)
span an
\(
r
\)-dimensional subspace of
\(
\mathbb R^n.
\)
Hence there exists a subset
\(
\{n_{i_1},\dots,n_{i_r}\}
\)
forming a basis of this span.
Consequently,
the remaining normals are linear combinations of
\(
n_{i_1},\dots,n_{i_r}.
\)
Therefore the remaining tropical hypersurfaces do not contribute new
independent tangent conditions.
Hence locally near
\(
x
\)
one has
\(
X
=
V(p_{i_1})
\operatorname{\cap_{stb}}
\cdots
\operatorname{\cap_{stb}}
V(p_{i_r}).
\)
We now intersect with a generic tropical affine linear space
\(
L
\subset\mathbb R^n
\)
of codimension
\(
n-r.
\)
Since
\(
\dim(X)=n-r
\)
and
\(
\dim(L)=r,
\)
the stable intersection
\(
X\operatorname{\cap_{stb}} L
\)
is zero-dimensional.
Let
\(
m_1,\dots,m_{n-r}\in\mathbb Z^n
\)
be primitive normal vectors defining
\(
L.
\)

Now the collection
\(
n_{i_1},\dots,n_{i_r},
m_1,\dots,m_{n-r}
\)
contains exactly
\(
n
\)
vectors in
\(
\mathbb Z^n.
\)
Since the intersection is transverse,
these vectors are linearly independent.
The tangent lattice of the tropical facet corresponding to
\(
V(p_{i_j})
\)
is
\(
N_{\sigma_{i_j}}
=
\{
u\in\mathbb Z^n:
\langle n_{i_j},u\rangle=0
\}.
\)
Similarly,
the tangent lattice of
\(
L
\)
is
\(
N_L
=
\{
u\in\mathbb Z^n:
\langle m_\ell,u\rangle=0
\text{ for all }\ell
\}.
\)
By the lattice-index theorem,
the local stable intersection multiplicity equals
\[
[
\mathbb Z^n:
N_{\sigma_{i_1}}
+\cdots+
N_{\sigma_{i_r}}
+
N_L
].
\]
Equivalently,
this index is equal to the determinant of the matrix formed by the
primitive normal vectors:
\[
m(x)
=
\left|
\det
(
n_{i_1},\dots,n_{i_r},
m_1,\dots,m_{n-r}
)
\right|.
\]

We now pass to the dual Newton polytope picture.
Each tropical hypersurface
\(
V(p_{i_j})
\)
is dual to a regular subdivision of the Newton polytope
\(
\Delta_{i_j}.
\)
The tropical affine linear space
\(
L
\)
is dual to the unimodular simplex
\(
\Sigma.
\)
The stable intersection points correspond bijectively to fully mixed
cells in the mixed subdivision of
\(
\Delta_{i_1}
+\cdots+
\Delta_{i_r}
+
\Sigma+\cdots+\Sigma,
\)
where
\(
\Sigma
\)
appears
\(
n-r
\)
times.
The local stable multiplicity equals the lattice volume of the
corresponding fully mixed cell.
Summing over all isolated stable intersection points gives
\[
\sum_x m(x)
=
n!\,
\operatorname{MV}
(
\Delta_{i_1},
\dots,
\Delta_{i_r},
\Sigma,\dots,\Sigma
).
\]
Thus only the
\(
r
\)
essential Newton polytopes contribute nontrivially to the mixed-volume
computation.
The remaining tropical equations are locally redundant and do not
contribute additional independent normal directions.
This proves the theorem.
\end{proof}

 \bibliographystyle{plain}

\end{document}